\date{}
\theoremstyle{plain}
      \newtheorem{theorem}{Theorem}[section]
      \newtheorem{lemma}[theorem]{Lemma}
            \newtheorem{claim}[theorem]{Claim}
      \newtheorem{corollary}[theorem]{Corollary}
      \newtheorem{conjecture}[theorem]{Conjecture}
\theoremstyle{definition}
\theoremstyle{remark}
\def\twr{\mbox{\rm twr}}
\title{Off-diagonal hypergraph Ramsey numbers}
\author{Dhruv Mubayi\thanks{Department of Mathematics, Statistics, and Computer Science, University of Illinois, Chicago, IL, 60607 USA.  Research partially supported by NSF grant DMS-1300138. Email: {\tt mubayi@uic.edu}} \and Andrew Suk\thanks{Department of Mathematics, Statistics, and Computer Science, University of Illinois, Chicago, IL, 60607 USA. Supported by NSF grant DMS-1500153. Email: {\tt suk@uic.edu}.}}
\begin{document}

\maketitle

\begin{abstract}

The Ramsey number $r_k(s,n)$ is the minimum $N$ such that every red-blue coloring of the $k$-subsets of $\{1, \ldots, N\}$ contains a red set of size $s$ or a  blue set of size $n$, where a set is red (blue) if all of its $k$-subsets are red (blue).
 A $k$-uniform \emph{tight path} of size $s$, denoted by $P_{s}$, is a set of $s$ vertices $v_1 < \cdots < v_{s}$ in $\mathbb{Z}$,  and all $s-k+1$ edges of the form $\{v_j,v_{j+1},\ldots, v_{j + k -1}\}$.
 Let $r_k(P_s, n)$ be the minimum $N$ such that every red-blue coloring of the $k$-subsets of $\{1, \ldots, N\}$ results in a red $P_{s}$  or a blue  set of size $n$. The problem of estimating both $r_k(s,n)$ and $r_k(P_s, n)$ for $k=2$ goes back to the seminal work of Erd\H os and Szekeres from 1935, while the case $k\ge 3$ was first investigated by Erd\H os and Rado in 1952.

In this paper, we deduce a quantitative relationship between multicolor variants of $r_k(P_s, n)$ and $r_k(n, n)$. This yields several consequences including the following:

\begin{itemize}

\item We determine the correct tower growth rate for both $r_k(s,n)$ and $r_k(P_s, n)$ for $s \ge k+3$. The question of determining the tower growth rate of $r_k(s,n)$ for all $s \ge k+1$ was posed by Erd\H os and Hajnal in 1972.

 \item We show that  determining the tower growth rate of $r_k(P_{k+1}, n)$ is
 equivalent to determining the tower growth rate of $r_k(n,n)$, which is a notorious conjecture of Erd\H os, Hajnal and Rado from 1965 that remains open.
 \end{itemize}
 Some related off-diagonal hypergraph Ramsey problems are also explored.

\end{abstract}

\section{Introduction}
 A $k$-uniform hypergraph $H$ with vertex set $V$ is a collection of $k$-element subsets of $V$.  We write $K^{(k)}_n$ for the complete $k$-uniform hypergraph on an $n$-element vertex set.  The Ramsey number $r_k(s,n)$ is the minimum $N$ such that every red-blue coloring of the edges of  $K^{(k)}_N$  contains a monochromatic red copy of $K_s^{(k)}$ or a monochromatic blue copy of $K^{(k)}_n$. Due to its wide range of applications in logic, number theory, analysis, geometry, and computer science, estimating Ramsey numbers has become one of the most central problems in combinatorics.

\subsection{Diagonal Ramsey numbers.}
 The \emph{diagonal} Ramsey number, $r_k(n,n)$ where $k$ is fixed and $n$ tends to infinity, has been studied extensively over the past 80 years.  Classic results of Erd\H os and Szekeres \cite{ES35} and Erd\H os \cite{E47} imply that $2^{n/2} < r_2(n,n) \leq 2^{2n}$ for every integer $n > 2$.  These bounds have been improved by various authors (see \cite{C09,T88,GR}).  However, the constant factors in the exponents have not changed over the last 70 years.  For 3-uniform hypergraphs, a result of Erd\H os, Hajnal, and Rado \cite{EHR} gives the best known lower and upper bounds for $r_3(n,n)$ which are of the form\footnote{We write $f(n) = O(g(n))$ if $|f(n)| \leq c|g(n)|$ for some fixed constant $c$ and for all $n \geq 1$; $f(n) = \Omega(g(n))$ if $g(n) = O(f(n))$; and $f(n) = \Theta(g(n))$ if both $f(n) = O(g(n))$ and $f(n) = \Omega(g(n))$ hold.  We write $f(n) = o(g(n))$ if for every positive $\epsilon > 0$ there exists a constant $n_0$ such that $f(n) \leq \epsilon |g(n)|$ for all $n \geq n_0$.} $2^{\Omega(n^2)} < r_3(n,n) \leq 2^{2^{O(n)}}.$  For $k \geq 4$, there is also a difference of one exponential between the known lower and upper bounds for $r_k(n,n)$, that is, $$\twr_{k-1}(\Omega(n^2)) \leq r_k(n,n) \leq \twr_k(O(n)),$$

\noindent where the tower function $\twr_k(x)$ is defined by $\twr_1(x) = x$ and $\twr_{i + 1} = 2^{\twr_i(x)}$ (see \cite{ES35,ER,EH72}).  A notoriously difficult conjecture of Erd\H os, Hajnal, and Rado states the following (Erd\H os offered \$500 for a proof).

\begin{conjecture} {\bf (Erd\H os-Hajnal-Rado~\cite{EHR})}
\label{conj1}
For $k\geq 3$ fixed, $r_k(n,n) \geq \twr_{k}(\Omega(n))$.
\end{conjecture}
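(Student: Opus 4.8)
The plan is to reduce Conjecture~\ref{conj1} to its first open case, $k=3$, and then to attempt a ``stepping-up'' construction that passes from graphs to $3$-uniform hypergraphs. For the reduction, recall the Erd\H os--Hajnal stepping-up lemma: for source uniformity $k\ge 3$, from any red/blue coloring of $K^{(k)}_N$ with no red $K^{(k)}_s$ and no blue $K^{(k)}_n$ one builds a coloring of $K^{(k+1)}_{2^N}$ with no red $K^{(k+1)}_{s+c}$ and no blue $K^{(k+1)}_{n+c}$, for an absolute constant $c$; hence $r_{k+1}(n,n)\ge 2^{\,r_k(n-c,\,n-c)}$ whenever $k\ge 3$. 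Consequently, if one could prove $r_3(n,n)\ge \twr_3(\Omega(n)) = 2^{2^{\Omega(n)}}$, then iterating the lemma yields $r_k(n,n)\ge \twr_k(\Omega(n))$ for every $k\ge 3$, which is exactly the conjecture. So the entire problem reduces to the case $k=3$.

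The natural attack at $k=3$ is to ``step up'' one further level, from graphs to $3$-graphs. Start from a coloring $\chi$ of $K^{(2)}_N$ with $N$ as large as possible that has no monochromatic $K_n$ (a random coloring gives $N\ge 2^{n/2}$, and one could hope to use a more structured pseudorandom or algebraic $\chi$ to push $N$ up to $2^{cn}$). Identify the vertex set of the target $3$-graph with $\{0,1\}^{N}$ ordered lexicographically; for $a<b$ let $\delta(a,b)\in[N]$ be the largest coordinate on which $a$ and $b$ differ. For a triple $a<b<c$, color $\{a,b,c\}$ using the comparison of $\delta(a,b)$ and $\delta(b,c)$ together with the value $\chi(\{\delta(a,b),\delta(b,c)\})$. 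One would then try to show that any set $H$ that is monochromatic in this $3$-coloring, of size $n+O(1)$, produces through its $\delta$-values a set of size $n$ that is monochromatic under $\chi$ --- a contradiction --- and therefore $r_3(n,n)\ge 2^{N}\ge 2^{2^{cn}}$.

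The main obstacle is precisely this last extraction step, and it is the reason the conjecture remains open. The Erd\H os--Hajnal stepping-up machinery genuinely requires the source uniformity to be at least $3$: when the source is a graph, the trace of the function $\delta$ on a putative monochromatic $3$-graph clique need not be monotone, so the bookkeeping that (for $k\ge 3$) converts a large monochromatic clique at level $k+1$ into an almost-as-large one at level $k$ collapses. In the graph-to-$3$-graph case one recovers only a structure that is ``monochromatic half the time,'' which loses a constant factor in the exponent instead of gaining an exponential --- and that merely reproduces the known bound $r_3(n,n)\ge 2^{\Omega(n^2)}$. Surmounting this would require either a genuinely new gadget that steps up from uniformity $2$ to $3$ in the two-color diagonal regime, or a direct algebraic/pseudorandom coloring of $K^{(3)}_N$ with $N=2^{2^{\Omega(n)}}$ and no monochromatic clique of size $n$; no such construction is currently known, which is exactly what makes Conjecture~\ref{conj1} notorious.
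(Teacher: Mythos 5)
You have correctly recognized that the labeled statement is a conjecture, not a theorem: the paper does not prove it, it only restates the Erd\H os--Hajnal--Rado conjecture and discusses its status. Your account of the stepping-up reduction to the base case $k=3$, and of the obstruction to stepping up from graphs to $3$-graphs (the $\delta$-comparison loses monotonicity when the source uniformity is $2$, which is why one only recovers the known $2^{\Omega(n^2)}$ bound), matches what the paper itself says in its introduction: improving $r_3(n,n)$ to $2^{2^{\Omega(n)}}$ would settle the conjecture via Erd\H os--Hajnal stepping-up. So there is no error in your write-up, but there is also no proof, and you are candid about that.

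What your proposal leaves out --- and what is actually the paper's contribution regarding this conjecture --- is a different kind of ``reduction.'' The main theorem (Theorem~\ref{main}) with $t=1$, $s_1=k+1$, $q=2$ gives the two-sided bound
\[
r_{k-1}\!\left(\tfrac{n}{2},\tfrac{n}{2}\right)\ \le\ r_k(P_{k+1},n)\ \le\ r_{k-1}(n,n),
\]
and hence Corollary~\ref{equiv}: Conjecture~\ref{conj1} holds if and only if $r_k(P_{k+1},n)\ge \twr_{k-1}(\Omega(n))$ for $k\ge 4$ (Conjecture~\ref{conj3}). That is, the paper does not try to push the stepping-up lemma down to the graph case; instead it shows the diagonal conjecture is \emph{equivalent} to a tight-path-versus-clique conjecture one uniformity lower. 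Both this equivalence and your $k=3$ reduction correctly locate why the problem is hard, but they are different observations: yours is the classical reduction to the smallest uniformity, whereas the paper's reframes the diagonal question as an off-diagonal question about monochromatic tight paths, which is the new angle that drives the rest of the paper's results.
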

The study of $r_3(n,n)$ may be particularly important for our understanding of hypergraph Ramsey numbers.  Any improvement on the lower bound for $r_3(n,n)$ can be used with a result of Erd\H os and
Hajnal \cite{EH72}, known as the stepping-up lemma, to obtain improved lower bounds for $r_k(n,n)$, for all $k\geq 4$.  In particular, proving that $r_3(n,n)$ grows at least double exponential in $\Omega(n)$, would imply that $r_k(n,n)$ does indeed grow as a tower of height $k$ in $\Omega(n)$, settling Conjecture \ref{conj1}.  In the other direction, any improvement on the upper bound for $r_3(n,n)$ can be used with a result of Erd\H os and Rado \cite{ER}, to obtain improved upper bounds for $r_k(n,n)$, for all $k\geq 4$.  It is widely believed that Conjecture \ref{conj1} is true, based on the fact that such bounds are known for four colors. More precisely, the $q$-color Ramsey number,

$$r_k(\underbrace{n,\ldots, n}_{q\textnormal{ times}})$$

\noindent is the minimum $N$ such that every $q$-coloring of the edges of the complete $N$-vertex $k$-uniform hypergraph $K^{(k)}_N$, contains a monochromatic copy of $K_n^{(k)}$.  A result of Erd\H os and Hajnal \cite{EH72} shows that $r_3(n,n,n,n) > 2^{2^{\Omega(n)}}$, and this implies that
$$r_k(n,n,n,n) = \twr_k(\Theta(n)),$$
for all fixed $k\geq 4$.  For three colors, Conlon, Fox, and Sudakov \cite{CFS} showed that for fixed $k\ge 3$,
$$r_k(n,n,n) \geq \twr_k(\Omega(\log^2 n)).$$

\subsection{Off-diagonal Ramsey numbers.}

The \emph{off-diagonal} Ramsey number, $r_k(s,n)$ with $k,s$ fixed and $n$ tending to infinity, has also been extensively studied.  It is known \cite{AKS,Kim,B,BK} that $r_2(3,n) =\Theta(n^2/\log n)$ and, for fixed $s > 3$,
\begin{equation*}\label{offgraph}
c_1\left( \frac{n}{\log n} \right)^{\frac{s+1}{2}} \leq r_2(s,n) \leq c_2 \frac{n^{s-1}}{\log^{s-2}n} ,\end{equation*}

\noindent  where $c_1$ and $c_2$ are absolute constants.  For 3-uniform hypergraphs, a result of Conlon, Fox, and Sudakov \cite{CFS} shows that for fixed $s\geq 4$  $$2^{\Omega(n\log n)} \leq r_3(s,n) \leq 2^{n^{s-2}\log n}.$$
For fixed  $s>k\geq 4$, it is known that
$$r_k(s,n) \leq \twr_{k-1}(O(n^{s-2}\log n)).$$
  By applying the Erd\H os-Hajnal stepping up lemma in the off-diagonal setting, it follows that $r_k(s,n) \geq \twr_{k-1}(\Omega(n))$, for $k\geq 4$ and for all $s \geq 2^{k-1} - k + 3$.  In 1972, Erd\H os and Hajnal  conjectured the following.

\begin{conjecture}{\bf (Erd\H os-Hajnal~\cite{EH72})}\label{conj2}
For $s \geq k + 1 \ge 5$ fixed,  $r_k(s,n) \geq\twr_{k-1}(\Omega(n)).$

\end{conjecture}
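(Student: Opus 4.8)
The plan is to derive Conjecture~\ref{conj2} in the range $s\ge k+3$ from multicolor diagonal Ramsey numbers of lower uniformity, and to point out that the two leftover cases $s=k+1,k+2$ reduce to notorious open problems and so resist this method. The first step is immediate: since the $k$-uniform tight path $P_s$ is a subhypergraph of $K^{(k)}_s$, any red-blue coloring of $\binom{[N]}{k}$ with no red $P_s$ and no blue $K^{(k)}_n$ has no red $K^{(k)}_s$ either, so $r_k(s,n)\ge r_k(P_s,n)$, and it suffices to lower bound the tight-path number.

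The heart of the matter is a reduction from $r_k(P_s,n)$ to $r_{k-1}(m;q):=r_{k-1}(\underbrace{m,\dots,m}_{q})$, valid for any $q$ with $2\le q\le s-k+1$. Take a coloring $\phi\colon\binom{[N]}{k-1}\to[q]$ with no monochromatic $K^{(k-1)}_m$ on $N=r_{k-1}(m;q)-1$ vertices, and color $\binom{[N]}{k}$ by declaring $e=\{v_1<\dots<v_k\}$ \emph{red} iff $\phi(e\setminus\{v_k\})>\phi(e\setminus\{v_1\})$. A red $P_s$ on $v_1<\dots<v_s$ would force, with $\psi_j:=\phi(\{v_j,\dots,v_{j+k-2}\})$, the strictly decreasing chain $\psi_1>\psi_2>\dots>\psi_{s-k+2}$ of $s-k+2>q$ values in $[q]$ — impossible; so there is no red $P_s$. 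Suppose now $S=\{v_1<\dots<v_n\}$ spans a blue $K^{(k)}_n$, and set $g(\{i_1<\dots<i_{k-1}\}):=\phi(\{v_{i_1},\dots,v_{i_{k-1}}\})$; blueness of every $k$-subset of $S$ says exactly that $g(\{i_1,\dots,i_{k-1}\})\le g(\{i_2,\dots,i_k\})$ whenever $i_1<\dots<i_k$. I would then prove, by induction on $j$, that any $g\colon\binom{[n]}{j}\to[q]$ with $g(\{i_1,\dots,i_j\})\le g(\{i_2,\dots,i_{j+1}\})$ for all $i_1<\dots<i_{j+1}$ is constant on $\binom{I}{j}$ for some $I$ with $|I|\ge(n-j+1)/q$: the base case $j=1$ is that a nondecreasing map $[n]\to[q]$ is constant on an interval of size $\ge n/q$; for the step one passes to $t\colon\binom{\{2,\dots,n\}}{j-1}\to[q]$ given by $t(D):=\max_{x<\min D}g(\{x\}\cup D)$, observes from the hypothesis (applied to the sets $\{x\}\cup D\cup\{y\}$) that $t(\{i_1,\dots,i_{j-1}\})\le g(\{i_1,\dots,i_j\})\le t(\{i_2,\dots,i_j\})$ for $2\le i_1<\dots<i_j$, so that $t$ satisfies the same hypothesis and the set $I$ produced for $t$ makes $g$ constant on $\binom{I}{j}$. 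Taking $j=k-1$ yields a monochromatic $K^{(k-1)}_{\lceil(n-k+2)/q\rceil}$ for $\phi$, which contradicts the choice of $\phi$ as soon as $n\ge qm+k-2$; hence
\[
r_k(P_s,n)\ \ge\ r_{k-1}\!\bigl(\lfloor(n-k+2)/q\rfloor;\,q\bigr)\qquad\text{for every }q\text{ with }2\le q\le s-k+1 .
\]

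It remains to feed in the known multicolor bounds. For $s\ge k+3$ we may take $q=4$; since $k-1\ge3$, the Erd\H os--Hajnal four-color bound $r_{k-1}(m,m,m,m)\ge\twr_{k-1}(\Omega(m))$ (which for $k=4$ reads $r_3(m,m,m,m)>2^{2^{\Omega(m)}}$ and follows from it by stepping up when $k\ge5$) gives, for each fixed $k\ge4$,
\[
r_k(s,n)\ \ge\ r_k(P_s,n)\ \ge\ r_{k-1}\!\bigl(\lfloor(n-k+2)/4\rfloor;\,4\bigr)\ \ge\ \twr_{k-1}\bigl(\Omega(n)\bigr),
\]
which is Conjecture~\ref{conj2} for $s\ge k+3$. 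For $s=k+2$ only $q=3$ is available, and the best known bound there, $r_{k-1}(m,m,m)\ge\twr_{k-1}(\Omega(\log^2 m))$, is too weak; for $s=k+1$ only $q=2$ is available, i.e.\ $r_k(P_{k+1},n)\ge r_{k-1}(\lfloor(n-k+2)/2\rfloor,\lfloor(n-k+2)/2\rfloor)$, so the conjectured bound for $r_k(P_{k+1},n)$ would follow from Conjecture~\ref{conj1} at uniformity $k-1$, and a matching reduction in the reverse direction should show the two are in fact equivalent.

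The main obstacle is precisely this dichotomy: the argument is essentially a ``tight path $\Rightarrow$ many colors'' translation, so $s=k+1$ and $s=k+2$ inherit all the difficulty of $2$-color and $3$-color diagonal hypergraph Ramsey numbers and cannot be settled without genuinely new ideas. Within the range $s\ge k+3$ the only delicate step is the stability lemma for the shift-monotone function $g$; its induction is short, but some care is needed at the boundary index $i_1=1$ (where $t$ is undefined and the ground set shrinks to $\{2,\dots,n\}$), and one checks that the multiplicative loss, a single factor of $q$, is harmless because $k$ and $q$ are fixed.
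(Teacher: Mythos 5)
Your proposal correctly proves the $s\ge k+3$ case of the conjecture, which is exactly what the paper establishes; the statement itself is a conjecture that remains open for $s=k+1,k+2$, and like the authors you (rightly) only reduce those cases to known open problems. The global strategy is the same as the paper's Theorem~\ref{main}: fix an extremal $q$-coloring $\phi$ of $\binom{[N]}{k-1}$, color a $k$-set red when $\phi$ strictly changes from its first to its last $(k-1)$-subtuple, observe that a red tight path of length $>q-1$ would force a strictly monotone chain of $\phi$-values, and show that a blue clique on $S$ forces a near-monochromatic $(k-1)$-subset for $\phi$. Where you diverge is in the proof of that last step. The paper's Claim inducts on the number $\ell$ of distinct $\phi$-colors appearing on $\binom{S}{k-1}$, splitting $S$ into a front segment and a back segment of size $\lfloor n/\ell\rfloor$, and uses a chain argument (sliding one vertex at a time) together with the lexicographic minimum of the color set to show that if the minimal color appears in the front, then the back segment is $\phi$-monochromatic. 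Your lemma instead inducts on the uniformity $j$: you replace the shift-monotone $g\colon\binom{[n]}{j}\to[q]$ by $t(D)=\max_{x<\min D}g(\{x\}\cup D)$, verify the sandwich $t(\{i_1,\dots,i_{j-1}\})\le g(\{i_1,\dots,i_j\})\le t(\{i_2,\dots,i_j\})$, and so propagate constancy down to $j=1$. Both are correct. Yours is a cleaner, self-contained statement about shift-monotone set functions with no reference to the auxiliary coloring $\chi$, at the cost of an additive $O(k)$ loss ($|I|\ge(n-j+1)/q$ rather than $\lfloor n/q\rfloor$), which is immaterial here; the paper's version is slightly sharper when $\phi$ happens to use fewer than $q$ colors and generalizes directly to $t\ge2$ paths, which you do not need for this conjecture but which the paper uses for Corollary~\ref{3colors}.
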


\noindent In \cite{CFS13}, Conlon, Fox, and Sudakov modified the Erd\H os-Hajnal stepping-up lemma to show that Conjecture \ref{conj2} holds for all $s\geq \lceil 5k/2\rceil - 3$.    Using a result of Duffus et al.~\cite{DLR} (see also  Moshkovitz and Shapira \cite{MS}  and Milans et al. \cite{MSW}), one can show that $r_{k}(s,n) \geq \twr_{k-2}(\Omega(n))$ for all $s\geq k + 1$.
In this paper, we prove the following result that nearly settles Conjecture~\ref{conj2} by determining the correct tower growth rate for $s\ge k+3$, and obtaining new bounds for the two remaining cases.

\begin{theorem} \label{main0}
There is a positive constant $c>0$ such that the following holds.  For $k \geq 4$ and $n>3k$,  we have
\begin{enumerate}
\item $r_k(k + 3,n) \geq \twr_{k-1}(cn),$

\item $r_k(k + 2,n) \geq \twr_{k - 1}(c\log^2 n),$

\item $r_k(k + 1, n) \geq \twr_{k - 2}(cn^2).$

\end{enumerate}

\end{theorem}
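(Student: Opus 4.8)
All three bounds will follow from a single ``stepping-down'' construction together with the known lower bounds for multicolor \emph{diagonal} Ramsey numbers at uniformity $k-1$. Since a clique $K^{(k)}_s$ contains a tight path $P_s$, it suffices to bound $r_k(P_{k+3},n)$, $r_k(P_{k+2},n)$ and $r_k(P_{k+1},n)$ from below, and the key lemma I would establish is: for every fixed $q\ge 2$ there is a constant $c=c(k,q)>0$ with
$$r_k\bigl(P_{k+q-1},\,n\bigr)\ \ge\ r_{k-1}\bigl(\underbrace{cn,\ldots,cn}_{q}\bigr).$$
To prove this, set $N=r_{k-1}(cn,\ldots,cn)-1$ and fix a $q$-coloring $\chi\colon\binom{[N]}{k-1}\to\{1<2<\cdots<q\}$ with no monochromatic $K^{(k-1)}_{cn}$. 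Define a red/blue coloring $\phi$ of $\binom{[N]}{k}$ by calling a $k$-set $\{v_1<\cdots<v_k\}$ \emph{red} if $\chi(\{v_1,\ldots,v_{k-1}\})<\chi(\{v_2,\ldots,v_k\})$ and \emph{blue} otherwise.

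The absence of a red $P_{k+q-1}$ is immediate: its edges are the $q$ windows $\{v_i,\ldots,v_{i+k-1}\}$ for $1\le i\le q$, and if all of them were red we would get a strictly increasing chain $\chi(\{v_1,\ldots,v_{k-1}\})<\chi(\{v_2,\ldots,v_k\})<\cdots<\chi(\{v_{q+1},\ldots,v_{q+k-1}\})$ of length $q+1$ in a $q$-element ordered set, which is impossible. The content is the absence of a blue $K^{(k)}_n$. If $W\subseteq[N]$ were a blue clique of size $n$, then for every $k$-subset $\{w_{a_1}<\cdots<w_{a_k}\}$ of $W$ we have $\chi(\{w_{a_1},\ldots,w_{a_{k-1}}\})\ge\chi(\{w_{a_2},\ldots,w_{a_k}\})$; equivalently, $\chi$ restricted to $\binom{W}{k-1}$ is \emph{monotone under right-shifts}, i.e.\ $\chi(\{i\}\cup R)\ge\chi(R\cup\{\ell\})$ whenever $R$ is a $(k-2)$-set with $i<\min R$ and $\ell>\max R$. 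I would then prove, by induction on $k$, that any $q$-coloring of the $(k-1)$-subsets of a linearly ordered $m$-set that is monotone under right-shifts contains a monochromatic $K^{(k-1)}_{m'}$ with $m'\ge c(k,q)\,m$; the base cases are short, since for $k-1=1$ such a coloring is simply non-increasing, and for $k-1=2$ one checks that $g(j):=\min_{i<j}\chi(\{i,j\})$ is non-increasing, hence constant on an interval $I$ of length $\ge(m-1)/q$, and the squeeze $g(\ell)\le\chi(\{j,\ell\})\le g(j)$ for $j<\ell$ in $I$ forces $\binom{I}{2}$ to be monochromatic. Applying this with $m=n$ yields a monochromatic $K^{(k-1)}_{cn}$ inside $W$, contradicting the choice of $\chi$ and proving the lemma.

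Given the lemma, the theorem follows by choosing $q\in\{2,3,4\}$ and invoking the standard multicolor diagonal lower bounds at uniformity $k-1$, which are available because $k\ge 4$ forces $k-1\ge 3$: for $q=4$, $r_{k-1}(n,n,n,n)\ge\twr_{k-1}(\Omega(n))$ (Erd\H os--Hajnal for $k-1=4$, obtained in general by the four-color stepping-up from $r_3(n,n,n,n)\ge 2^{2^{\Omega(n)}}$) gives $r_k(k+3,n)\ge r_k(P_{k+3},n)\ge\twr_{k-1}(cn)$; for $q=3$, $r_{k-1}(n,n,n)\ge\twr_{k-1}(\Omega(\log^2 n))$ (Conlon--Fox--Sudakov) gives $r_k(k+2,n)\ge\twr_{k-1}(c\log^2 n)$; and for $q=2$, $r_{k-1}(n,n)\ge\twr_{k-2}(\Omega(n^2))$ gives $r_k(k+1,n)\ge\twr_{k-2}(cn^2)$. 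In each case absorbing the constant factor inside the $\Omega$ is harmless, and the hypothesis $n>3k$ only guarantees the relevant paths and cliques are nondegenerate.

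The main obstacle is the inductive step of the monotone-coloring lemma: pushing the ``$g$ is non-increasing, hence constant on a long interval'' mechanism from $(k-2)$-sets up to $(k-1)$-sets while keeping the resulting monochromatic clique of size \emph{linear} in $m$ — a merely polynomial bound would weaken statement (1) from $\twr_{k-1}(cn)$ to $\twr_{k-1}(n^{\Omega(1)})$. Everything else is either a one-line check (the red tight path) or a citation (the multicolor diagonal bounds and their stepping-up to higher uniformity).
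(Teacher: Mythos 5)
Your construction and reduction are exactly right — defining the red/blue coloring via $\chi(\{v_1,\ldots,v_{k-1}\})<\chi(\{v_2,\ldots,v_k\})$ and reducing to a statement about colorings that are ``monotone under right-shifts'' is precisely the content of the paper's Main Result (Theorem~\ref{main}) specialized to $t=1$. However, you explicitly leave the crucial monotone-coloring lemma unproved (``the main obstacle is the inductive step\ldots''), and this is a genuine gap: the whole theorem hinges on it. Moreover, induction on the uniformity $k$ is the wrong axis; the clean way to prove this lemma is by induction on the \emph{number of colors actually used}.

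Here is how the paper closes the gap. Suppose $\phi:\binom{S}{k-1}\to\{1,\ldots,q\}$ is monotone under right-shifts on an ordered set $S=\{v_1<\cdots<v_n\}$ and uses at most $\ell\le q$ distinct colors; one shows by induction on $\ell$ that $S$ contains a $\phi$-monochromatic set of size $\lfloor n/\ell\rfloor$. The base case $\ell=1$ is trivial. For the step, let $a^*$ be the smallest color appearing, and split $S$ into $S_1$ (the first $n-\lfloor n/\ell\rfloor$ vertices) and $S_2$ (the last $\lfloor n/\ell\rfloor$). \emph{Case 1:} some $\{u_1<\cdots<u_{k-1}\}\in\binom{S_1}{k-1}$ has $\phi=a^*$. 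Then for any $\{w_1<\cdots<w_{k-1}\}\in\binom{S_2}{k-1}$, the right-shift monotonicity gives $\phi(u_2,\ldots,u_{k-1},w_1)\le a^*$, hence $=a^*$ by minimality of $a^*$; iterating this propagation $k-1$ times yields $\phi(w_1,\ldots,w_{k-1})=a^*$, so all of $\binom{S_2}{k-1}$ is color $a^*$ and we take $T=S_2$. \emph{Case 2:} $S_1$ avoids $a^*$, hence uses at most $\ell-1$ colors, and the induction hypothesis gives a monochromatic set of size $\lfloor(n-\lfloor n/\ell\rfloor)/(\ell-1)\rfloor\ge\lfloor n/\ell\rfloor$ inside $S_1$. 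This gives $m'\ge\lfloor m/q\rfloor$ (a constant $1/q$, independent of $k$), which is exactly the linear dependence you were worried about. With this in hand, the rest of your argument — the red-tight-path check and the three invocations of $r_{k-1}(n;q)$ lower bounds for $q\in\{2,3,4\}$ — is correct and matches the paper.
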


There are two novel ingredients to our constructions. First, we relate these problems to   estimates for Ramsey numbers of tight-paths versus cliques, which we find of independent interest.  Second, we use $(k-1)$-uniform diagonal Ramsey numbers for more than two colors to obtain constructions for $k$-uniform off-diagonal Ramsey numbers for two colors. This differs from the usual paradigm in this area, exemplified by the stepping up lemma, where the number of colors stays the same or goes up as the uniformity increases (see, e.g. \cite{AGLM, CFS13, CFS, DLR,  EH72, EHR, K, M15}). This topic has also been extensively studied in the context of partition relations for ordinals.  It is quite possible that our constructions can also be applied to the infinite setting, though we have not explored this here.

After this paper was written, we learned that a bound
similar to Theorem~\ref{main0} part (1) was also recently proved by Conlon, Fox, and Sudakov (unpublished), using the more traditional stepping-up argument of Erd\H os and Hajnal.

\subsection{Tight-path versus clique.}
 Consider an \emph{ordered} $N$-vertex $k$-uniform hypergraph $H$, that is, a hypergraph whose vertex set is $[N] = \{1,2,\ldots, N\}$.  A \emph{tight path} of size $s$ in $H$, denoted by $P^{(k)}_{s}$, comprises a set of $s$ vertices $v_1, \ldots, v_{s} \in [N]$, $v_1 < \cdots < v_s$, such that $(v_j,v_{j+1},\ldots, v_{j + k -1}) \in E(H)$ for $j = 1,2,\ldots, s - k + 1$.  The \emph{length} of $P^{(k)}_s$ is the number of edges, $s - k + 1$.

Here, we obtain lower and upper bounds for Ramsey numbers for tight-paths versus cliques.  To be more precise, we need the following definition.  Given $q$ $k$-uniform hypergraphs $F_1,\ldots, F_q$, the Ramsey number $r(F_1,\ldots, F_q)$ is the minimum $N$ such that every $q$-coloring of the edges of the complete $N$-vertex $k$-uniform hypergraph $K^{(k)}_N$, whose vertex set is $[N] = \{1,\ldots, N\}$, contains an $i$-colored copy of $F_i$ for some $i$.  In order to avoid the excessive use of superscripts, we use the simpler notation
$$r_k(P_s, P_n) = r(P_s^{(k)}, P_n^{(k)}) \qquad{ {\rm and} } \qquad r_k(P_s, n) = r(P_s^{(k)}, K_n^{(k)}).$$
Two famous theorems of Erd\H os and Szekeres in \cite{ES35}, known as the monotone subsequence theorem and the cups-caps theorem, imply that $r_2(P_s , P_n) = (n - 1)(s - 1) + 1$ and $r_3(P_s,P_n) = {n+s - 4 \choose s-2} + 1.$   Fox, Pach, Sudakov, and Suk \cite{FPSS} later extended their results to $k$-uniform hypergraphs, and gave a geometric application related to the Happy Ending Theorem.\footnote{The main result in \cite{ES35}, known as the Happy Ending Theorem, states that for any positive integer $n$, any sufficiently large set of points in the plane in general position has a subset of $n$ members that form the vertices of a convex polygon.}  See also \cite{DLR,MS,MSW} for related results.

The proof of the Erd\H os-Szekeres monotone subsequence theorem \cite{ES35} (see also Dilworth's Theorem \cite{D50}) actually implies that $r_2(P_s , n) = (n - 1)(s - 1) + 1.$  For $k\geq 3$, estimating $r_k(P_s, n)$ appears to be more difficult.  Clearly we have

\begin{equation} \label{easy} r_k(P_s, n) \leq r_k(s,n)   \leq \twr_{k - 1}(O(n^{s - 2}\log n)).\end{equation}

Our main result is a new connection between
$r_k(P_s, n)$ and $r_k(n,n)$. Again, we will use the simpler notation
$$r_k(n;q) = r_k(\underbrace{n,\ldots, n}_{q\textnormal{ times}}) \qquad {\rm and } \qquad
r_k(P_{s_1}, \ldots, P_{s_t}, n)=r(P^{(k)}_{s_1},\ldots, P^{(k)}_{s_t},K^{(k)}_n).$$

\begin{theorem} {\bf (Main Result)}\label{main}
Let $k\geq 2$ and $s_1,\ldots, s_t \geq k + 1$.  Then for $q = (s_1 - k + 1)\cdots (s_t - k + 1)$, we have $$r_{k -1}(\lfloor n/q\rfloor ;q)
\le r_k(P_{s_1}, \ldots, P_{s_t}, n) \le
r_{k -1}(n;q)
.$$
\end{theorem}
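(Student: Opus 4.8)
The plan is to prove both inequalities by exhibiting explicit colorings that transfer between uniformity $k$ and uniformity $k-1$. Throughout, write $q_i = s_i - k + 1$ for the length of the tight path $P_{s_i}^{(k)}$, so $q = q_1 \cdots q_t$. The central observation is that a tight path $P_{s_i}^{(k)}$ of length $q_i$ in a $k$-uniform hypergraph on $[N]$ can be encoded via the ``link'' structure: a $k$-edge $\{v_1 < \cdots < v_k\}$ is naturally associated with the $(k-1)$-edge $\{v_1 < \cdots < v_{k-1}\}$ obtained by deleting its top vertex, together with a ``color label''. The heart of the argument is a monotone-subsequence type pigeonhole: if we color the $(k-1)$-edges of $K_{N}^{(k-1)}$ with $q$ colors, we can lift this to a $2$-coloring (or rather a $(t+1)$-coloring with one ``clique'' color) of the $k$-edges of $K_N^{(k)}$, in such a way that a long monochromatic clique in color $j$ downstairs forces a tight path $P_{s_j}^{(k)}$ upstairs, and a monochromatic $n$-clique upstairs forces a monochromatic $n$-clique downstairs.

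For the upper bound $r_k(P_{s_1}, \ldots, P_{s_t}, n) \le r_{k-1}(n; q)$: let $N = r_{k-1}(n;q)$ and take any coloring $\chi$ of the $k$-edges of $K_N^{(k)}$ with colors $1, \ldots, t, *$ (where $*$ plays the role of the ``blue'' clique color). First I would define an auxiliary coloring $\phi$ of the $(k-1)$-edges: for $e = \{v_1 < \cdots < v_{k-1}\}$, look at the sequence of $\chi$-colors of the $k$-edges $e \cup \{w\}$ as $w$ ranges over $\{v_{k-1}+1, \ldots, N\}$, and encode into $\phi(e) \in [q]$ (a $q$-ary string / tuple recording, for each color $i \in [t]$, the maximal length of a ``run'' or monotone block one can extract, capped at $q_i$). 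If no $\phi$-monochromatic set of size $n$ exists among the $(k-1)$-edges, then since $N = r_{k-1}(n;q)$, one does exist — and the definition of $\phi$ is arranged precisely so that a $\phi$-monochromatic set of size $n$ either yields a $\chi$-monochromatic clique of size $n$ in color $*$ (when the common $\phi$-value records ``too short'' in every color $i$, forcing all the top edges to be color $*$) or yields a tight path $P_{s_j}^{(k)}$ in color $j$ for some $j$ (when the common value records length $q_j$ in coordinate $j$, letting us greedily build the path). The technical work is in setting up $\phi$ so that the right dichotomy holds; this is where a careful induction on the ``position'' within the clique, in the spirit of the Erd\H os--Szekeres proof of $r_2(P_s,n) = (n-1)(s-1)+1$, does the job.

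For the lower bound $r_k(P_{s_1}, \ldots, P_{s_t}, n) \ge r_{k-1}(\lfloor n/q\rfloor; q)$: take $M = r_{k-1}(\lfloor n/q\rfloor; q) - 1$ and a $q$-coloring $\psi$ of the $(k-1)$-edges of $K_M^{(k-1)}$ with no monochromatic set of size $\lfloor n/q\rfloor$. I would build a $(t+1)$-coloring $\chi$ of the $k$-edges of $K_M^{(k)}$ by letting $\chi(\{v_1 < \cdots < v_k\})$ depend on $\psi(\{v_1, \ldots, v_{k-1}\})$ and $\psi(\{v_2, \ldots, v_k\})$ — roughly, color the $k$-edge by color $i \in [t]$ if these two $\psi$-values are ``consecutive'' in the sense appropriate to coordinate $i$ (this is the standard trick that a tight path forces a long monotone chain of link-values), and by $*$ otherwise. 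Then a $\chi$-colored tight path $P_{s_j}^{(k)}$ would produce a sequence of $q_j + 1$ vertices whose consecutive $(k-1)$-links have $\psi$-values forming a monotone chain of length $q_j$ in coordinate $j$, contradicting — after decoding the product structure of $[q]$ — the choice of $\psi$; and a $\chi$-monochromatic $*$-clique of size $n$ would, by pigeonhole over the $q$ possible $\psi$-values, contain a $\psi$-monochromatic set of size $\lceil n/q\rceil \ge \lfloor n/q\rfloor$, again a contradiction. Hence no such $\chi$-structure exists, giving the bound.

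The main obstacle is the bookkeeping in the upper bound: getting the auxiliary $(k-1)$-coloring $\phi$ to have exactly $q = \prod (s_i - k + 1)$ values while guaranteeing the clean dichotomy ``short in all colors $\Rightarrow$ all-$*$ clique'' versus ``long in color $j$ $\Rightarrow$ tight path $P_{s_j}$''. One must verify that reading off a long monochromatic $\phi$-set actually lets one extract the tight path in the $k$-uniform hypergraph (not just a monotone sequence of labels), which requires that the $\phi$-value of a $(k-1)$-edge genuinely controls the colors of the $k$-edges extending it within the monochromatic set — this is precisely why $\phi(e)$ must be defined using only vertices above $e$, so the relevant extensions stay inside the set. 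I expect the lower-bound direction to be comparatively routine once the encoding $[q] \cong [q_1] \times \cdots \times [q_t]$ is fixed and the ``consecutive link values'' trick is set up; the factor-of-$q$ loss there is an honest artifact of the pigeonhole and explains the gap between the two sides of the theorem.
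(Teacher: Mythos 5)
Your overall strategy matches the paper: both directions go through an auxiliary $(k-1)$-uniform coloring $\phi$ and transfer information between $\phi$ and the $k$-uniform coloring $\chi$ via links. But there is a genuine gap in your lower-bound argument, and the upper bound is handwaved at the exact spot where the key idea lives.

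In the lower bound, you assert that from a $\chi$-monochromatic $*$-clique $S$ of size $n$, simple pigeonhole over the $q$ values of $\psi$ yields a $\psi$-monochromatic set of size about $n/q$. This is not a pigeonhole statement at all: $\psi$ colors $(k-1)$-tuples, not vertices, so ``each of $n$ vertices falls in one of $q$ classes'' is not available, and a $q$-coloring of $\binom{S}{k-1}$ certainly need not contain a monochromatic set of size $n/q$ in general. What makes it work in the paper is that $\chi\equiv t+1$ on $S$ is a strong structural constraint: with $\chi(v_1,\dots,v_k)$ defined to be $i$ for the \emph{minimum} index with $a_i<b_i$ (and $*$ if none), being $*$ everywhere means $\phi$ is coordinatewise nonincreasing along every sliding $(k-1)$-window in $S$. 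The paper then proves a nontrivial Claim by induction on the number of colors, splitting $S$ into $S_1$ and $S_2$ and using that once the lexicographically smallest color appears in $S_1$, it must propagate to all of $\binom{S_2}{k-1}$ (coordinatewise $\le$ the lex-minimum plus membership in $\mathcal{C}$ forces equality). Your proposed ``consecutive $\psi$-values'' rule for $\chi$ would destroy exactly this propagation structure, since $*$ would then also include pairs whose coordinates jump by $2$ or decrease, and the Claim would not go through.

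For the upper bound, the paper's $\phi$ is cleanly defined by looking \emph{backward}: $\phi_i(v_1,\dots,v_{k-1})$ is the length of the longest $i$-colored tight path ending at $(v_1,\dots,v_{k-1})$. This gives immediately that if $\chi(v_1,\dots,v_k)=i$ then $\phi_i(v_2,\dots,v_k)>\phi_i(v_1,\dots,v_{k-1})$, so on any $\phi$-monochromatic set every $k$-tuple must receive color $t+1$. Your sketch instead encodes ``runs'' in the colors of upward extensions $e\cup\{w\}$, which is not a well-defined quantity, does not obviously live in $[q]$, and does not obviously supply the strict-increase property that the argument needs. You flag this yourself as ``the technical work,'' but that technical work \emph{is} the proof; without the backward-looking path-length definition, the upper bound does not close.
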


Theorem \ref{main} has several immediate consequences with $t=1$. First, we can considerably improve the upper bound for $r_k(P_s, n)$ in (\ref{easy}).

\begin{corollary}\label{cor1}
For fixed $k\geq 3$ and $s\geq k + 1$, we have $ r_k(P_s, n) \leq \twr_{k-1}(O(sn\log s))$.
\end{corollary}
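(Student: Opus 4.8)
Put $q=s-k+1$; since $s\ge k+1$ we have $2\le q\le s$. Applying the upper bound of Theorem~\ref{main} with $t=1$ gives $r_k(P_s,n)\le r_{k-1}(n;q)$, so it suffices to prove the classical multicolor Erd\H os--Rado estimate: for each fixed $j\ge 2$ there is a constant $c_j$ with
$$r_j(n;q)\ \le\ \twr_j\!\left(c_j\,qn\log q\right)\qquad(q\ge 2),$$
after which taking $j=k-1$ and using $q\le s$ finishes the proof.

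I would prove this by induction on $j$. The base case $j=2$ is the standard multicolor graph Ramsey bound $r_2(n;q)\le\binom{q(n-1)}{n-1,\dots,n-1}\le q^{q(n-1)}\le 2^{qn\log_2 q}$. For the inductive step, run the Erd\H os--Rado greedy argument on a $q$-coloring of the $(j+1)$-subsets of $[N]$: pick vertices $a_1,a_2,\dots$ one at a time while maintaining a candidate set $S_i$ so that, for every $j$-subset $T\subseteq\{a_1,\dots,a_i\}$, the color of $T\cup\{x\}$ is constant over $x\in S_i$; the $i$-th round costs a factor $q^{\binom{i-1}{j-1}}$, so $R:=r_j(n;q)$ rounds can be completed with $S_R\neq\emptyset$ whenever $N\ge R+q^{\binom{R}{j}}$. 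The colors passed down to the $j$-subsets of $\{a_1,\dots,a_R\}$ then form a $q$-coloring of an $R$-vertex $j$-uniform clique, hence contain a monochromatic $K_n^{(j)}$, which lifts to a monochromatic $K_n^{(j+1)}$. Therefore
$$r_{j+1}(n;q)\ \le\ R+q^{\binom{R}{j}}\ \le\ q^{\,2R^{\,j}},\qquad R=r_j(n;q).$$
Substituting $R\le\twr_j(c_jqn\log q)$: the outer exponentiation by $q$ supplies the extra $\log q$ at the top of the tower, while the polynomial factor $R^{j}$ and the additive constants get absorbed into a slightly larger constant $c_{j+1}$. Iterating the $k-3$ steps up to $j=k-1$ keeps this constant bounded since $k$ is fixed, which gives the claim.

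I expect the only point needing care to be the final absorption --- verifying that raising $\twr_j(c_jqn\log q)$ to the $j$-th power and multiplying by a power of $q$ inflates the tower's argument by only a constant factor. This is routine, using $\twr_j(x)^{j}=2^{\,j\,\twr_{j-1}(x)}$ together with the fact that $\twr_{j-1}$ has slope at least $1$, so the stray $\log\log q$ and constant terms are swallowed by $c_jqn\log q$ at each level. There is no substantive obstacle: the real content of the corollary is the reduction provided by Theorem~\ref{main}.
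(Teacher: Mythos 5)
Your proof is correct and takes essentially the same approach as the paper: reduce to $r_{k-1}(n;q)$ via the upper bound in Theorem~\ref{main} with $t=1$ and $q=s-k+1<s$, bound $r_2(n;q)$ by the Erd\H os--Szekeres recurrence, and then lift through the uniformities with the Erd\H os--Rado stepping-down argument. The paper merely cites the two classical bounds where you spell out the greedy construction and the constant-absorption calculation, but the substance is identical.
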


\noindent
Indeed, using the standard Erd\H os-Szekeres recurrence~\cite{ES35}, we have $r_2(n;q)< q^{nq}=\twr_2(O(qn\log q))$, and the upper bound argument of Erd\H os-Rado~\cite{ER} then yields
$r_{k-1}(n;q)< \twr_{k-1}(O(qn\log q))$. Applying Theorem~\ref{main} with $t=1, s_1=s$, and $q=s-k+1<s$, now implies Corollary~\ref{cor1}.
\medskip

Combining the lower bounds in Theorem \ref{main} with the known lower bounds for $r_{k-1}(n,n, n,n)$ in~\cite{EH72}, $r_{k - 1}(n,n,n)$ in~\cite{CFS}, and $r_{k - 1}(n,n)$ in~\cite{EH72}, we establish the following inequalities.  There is an absolute constant $c>0$ such that for all $k \geq 4$ and $n>3k$
$$ r_k(P_{k  + 3}, n)  \geq r_{k-1}\left(\frac{n}{4},\frac{n}{4},\frac{n}{4},\frac{n}{4}\right) \geq \twr_{k-1}(cn),$$

$$ r_k(P_{k  + 2}, n)  \geq r_{k-1}\left(\frac{n}{3},\frac{n}{3},\frac{n}{3}\right) \geq \twr_{k-1}(c\log^2n),$$

$$r_k(P_{k  + 1}, n)  \geq r_{k-1}\left(\frac{n}{2},\frac{n}{2}\right)  \geq \twr_{k-2}(cn^2).$$
Summarizing, we have just proved parts 1--3 of the following theorem, which is a strengthening of Theorem~\ref{main0} as $r_k(s,n) \ge r_k(P_s, n)$.

\begin{theorem}\label{mainlower}
There is a positive constant $c>0$ such that $r_3(P_4,n) > 2^{cn}$, and for $k \geq 4$ and $n>3k$,
\begin{enumerate}

\item $r_k(P_{k + 3}, n) \geq \twr_{k-1}(cn),$

\item $r_k(P_{k + 2}, n) \geq \twr_{k - 1}(c\log^2 n),$

\item  $r_k(P_{k + 1}, n) \geq \twr_{k - 2}(cn^2).$

\end{enumerate}
\end{theorem}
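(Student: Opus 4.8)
The plan is to obtain Theorem~\ref{mainlower} as an essentially immediate consequence of the lower-bound half of the Main Result (Theorem~\ref{main}), combined with the best known lower bounds for multicolor diagonal Ramsey numbers. Applying Theorem~\ref{main} with $t=1$, $s_1=s$, and $q=s-k+1$ gives
$$r_k(P_s,n)\;\ge\; r_{k-1}\!\left(\lfloor n/q\rfloor;\,q\right),$$
so each assertion reduces to inserting a known lower bound for a $(k-1)$-uniform diagonal Ramsey number in $q\in\{2,3,4\}$ colors and then simplifying.

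First I would dispatch the three cases with $k\ge 4$. For part (1), take $s=k+3$, so $q=4$ and $r_k(P_{k+3},n)\ge r_{k-1}(\lfloor n/4\rfloor;4)$; the bounds recorded in the introduction (from~\cite{EH72}) give $r_{k-1}(m,m,m,m)\ge \twr_{k-1}(\Omega(m))$ for every $k\ge 4$, and substituting $m=\lfloor n/4\rfloor\ge n/8$ (valid since $n>3k\ge 12$) and absorbing constants yields $\twr_{k-1}(cn)$. For part (2), take $s=k+2$, so $q=3$ and $r_k(P_{k+2},n)\ge r_{k-1}(\lfloor n/3\rfloor;3)$; the Conlon--Fox--Sudakov bound \cite{CFS}, namely $r_{k-1}(m,m,m)\ge \twr_{k-1}(\Omega(\log^2 m))$, then gives $\twr_{k-1}(c\log^2 n)$. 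For part (3), take $s=k+1$, so $q=2$ and $r_k(P_{k+1},n)\ge r_{k-1}(\lfloor n/2\rfloor,\lfloor n/2\rfloor)$; the bound $r_{k-1}(m,m)\ge \twr_{k-2}(\Omega(m^2))$ from~\cite{EH72} gives $\twr_{k-2}(cn^2)$. Finally, for $r_3(P_4,n)$ we have $k=3$, $s=4$, $q=2$, so Theorem~\ref{main} gives $r_3(P_4,n)\ge r_2(\lfloor n/2\rfloor,\lfloor n/2\rfloor)$, and the classical Erd\H os bound $r_2(m,m)>2^{m/2}$ gives $r_3(P_4,n)>2^{n/4-O(1)}>2^{cn}$ for a suitable $c>0$.

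I expect no real obstacle: the substantive work is entirely contained in Theorem~\ref{main}, and what remains is bookkeeping. The points that need attention are: matching each $q$ to the correct known multicolor lower bound; verifying that the tower heights line up --- in particular the $\twr_{k-1}$ in parts (1)--(2) versus the $\twr_{k-2}$ in part (3), which just reflects the known one-exponential gap for two-color diagonal Ramsey numbers; and picking $c$ small enough (uniformly in $k$) to absorb the floor functions and the constants hidden inside the $\Omega(\cdot)$'s, which is where the hypothesis $n>3k$ is used. It is also worth recording the trivial inequality $r_k(P_s,n)\le r_k(s,n)$ --- valid since $P_s^{(k)}$ is a subhypergraph of $K_s^{(k)}$ --- because this is precisely what makes Theorem~\ref{mainlower} a strengthening of Theorem~\ref{main0}.
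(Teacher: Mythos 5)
Your treatment of parts (1)--(3) (the $k\ge 4$ cases) coincides exactly with the paper: the authors likewise apply the lower bound of Theorem~\ref{main} with $t=1$ and $s_1\in\{k+3,k+2,k+1\}$, and then plug in the lower bounds for $r_{k-1}(m;4)$, $r_{k-1}(m;3)$, $r_{k-1}(m;2)$ from \cite{EH72}, \cite{CFS}, \cite{EH72} respectively, exactly as you do. So that portion is a match.

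The one place you diverge is the inequality $r_3(P_4,n)>2^{cn}$. You obtain it by applying Theorem~\ref{main} with $k=3$, $t=1$, $s_1=4$, $q=2$, giving $r_3(P_4,n)\ge r_2(\lfloor n/2\rfloor,\lfloor n/2\rfloor)>2^{\lfloor n/2\rfloor/2}$, and this is perfectly valid since Theorem~\ref{main} is stated for all $k\ge 2$. The paper instead gives a direct probabilistic construction: take a uniformly random $2$-coloring $\phi$ of $\binom{[N]}{2}$, colour a triple $(v_1,v_2,v_3)$ red iff $\phi(v_1,v_2)<\phi(v_2,v_3)$ and blue otherwise, observe there is no red $P_4^{(3)}$, and bound the expected number of blue $K_n^{(3)}$'s by restricting attention to a partial Steiner $(n,3,2)$-system so that the relevant events are independent. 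That construction is exactly the $k=3$, $q=2$ instance of the stepping map used in the proof of Theorem~\ref{main}, but run with a random base coloring and analyzed directly rather than black-boxing through Theorem~\ref{main}. Your route is shorter and in fact yields a slightly better constant (any $c<1/4$, versus roughly $c\approx (1/7)\log_2(4/3)$ from the paper's Steiner-system calculation); what the paper's version buys is a self-contained, illustrative instance of the stepping-up construction that does not invoke the Erd\H{o}s lower bound for $r_2(m,m)$ as a black box. Both are correct, and your observation that $r_k(P_s,n)\le r_k(s,n)$ makes Theorem~\ref{mainlower} a strengthening of Theorem~\ref{main0} is exactly the remark the paper makes as well.
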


We conjecture the following strengthening of the Erd\H os-Hajnal conjecture.

\begin{conjecture}\label{conj3}
For $k\geq 4$ fixed, $r_k(P_{k + 1}, n)  \ge \twr_{k-1}(\Omega(n)).$
\end{conjecture}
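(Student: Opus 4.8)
The plan is to \emph{reduce the conjecture to the two-color diagonal problem} using Theorem~\ref{main}, and then to confront the genuinely hard step, which turns out to be a famous open problem. Applying Theorem~\ref{main} with $t=1$, $s_1=k+1$, so that $q=s_1-k+1=2$, gives
$$r_{k-1}\!\left(\lfloor n/2\rfloor,\lfloor n/2\rfloor\right)\ \le\ r_k(P_{k+1},n)\ \le\ r_{k-1}(n,n).$$
Hence the lower bound in Conjecture~\ref{conj3} is \emph{equivalent} to $r_{k-1}(m,m)\ge\twr_{k-1}(\Omega(m))$, i.e.\ to Conjecture~\ref{conj1} in uniformity $k-1$ (recall that $r_{k-1}(m,m)\le\twr_{k-1}(O(m))$ is already known, so the lower bound is the whole content). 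For $k=4$ this is \emph{exactly} the Erd\H os--Hajnal--Rado conjecture for $3$-uniform hypergraphs, the \$500 problem; for $k\ge 5$ it is Conjecture~\ref{conj1} in uniformity $k-1\ge 4$, which would follow from the case $r_3(m,m)\ge 2^{2^{\Omega(m)}}$ by iterating the Erd\H os--Hajnal stepping-up lemma. Moreover the \emph{upper} bound $r_k(P_{k+1},n)\le r_{k-1}(n,n)$ in Theorem~\ref{main} shows that no structural peculiarity of tight paths can be exploited to shortcut the problem: the tower-growth rates of $r_k(P_{k+1},n)$ and of $r_{k-1}(n,n)$ rise and fall together. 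So the first, and only essential, step is to prove that $r_3$ grows doubly exponentially in $\Omega(m)$.

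The remaining step is therefore to construct a red/blue coloring of the triples of $[\,2^{2^{cm}}\,]$ with no red and no blue set of size $m$, and this is where the obstacle lies. Naive probabilistic colorings give only $r_3(m,m)\ge 2^{\Omega(m^2)}$, which is essentially the limit of the first-moment method here, so one genuinely needs structure. The only known doubly-exponential lower bound in the $3$-uniform setting, $r_3(m,m,m,m)>2^{2^{\Omega(m)}}$ of Erd\H os and Hajnal, uses \emph{four} colors and is obtained by stepping up from a $2$-coloring of $K_N^{(2)}$ with $N=r_2(m,m)-1\sim 2^{m/2}$; the obstruction to Conjecture~\ref{conj3}, equivalently to Conjecture~\ref{conj1}, is precisely the difficulty of collapsing those four colors down to two while retaining the doubly-exponential number of vertices. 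I would therefore try two things in parallel: (i) a careful redesign of the stepping-up construction that encodes enough information in a $2$-coloring of triples (perhaps by a more arithmetic rule on the position of the highest disagreeing coordinate, rather than the crude parity rules used in the classical arguments); and (ii) an explicit algebraic or pseudorandom triple system of size $2^{2^{\Omega(m)}}$ with small clique and independence number, in the spirit of the Frankl--Wilson/Alon--Pudl\'ak type constructions that beat the random bound for graphs. At present neither route is known to work, and for $k\ge 5$ no method is known to lower-bound $r_{k-1}(m,m)$ beyond $\twr_{k-2}(\Omega(m^2))$ without first settling the $3$-uniform case.

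The hard part, then, is not the reduction --- which is immediate from Theorem~\ref{main} --- but the resolution of the Erd\H os--Hajnal--Rado conjecture itself. It is worth noting how much Conjecture~\ref{conj3} asks for beyond what is already established: Theorem~\ref{mainlower}(3) gives $r_k(P_{k+1},n)\ge\twr_{k-2}(cn^2)$, so the conjecture demands trading a polynomial factor at tower-height $k-2$ for an entire extra exponential, reaching height $k-1$. Realistic intermediate milestones, each of which would \emph{automatically} sharpen the bounds for $r_k(P_{k+1},n)$ via Theorem~\ref{main}, are: improving $r_3(m,m)$ from $2^{\Omega(m^2)}$ to $2^{m^{\omega(1)}}$ or to $2^{2^{m^{o(1)}}}$; improving the three-color bound $r_3(m,m,m)$ from $2^{2^{\Omega(\log^2 m)}}$ to $2^{2^{\Omega(m)}}$; and only then searching for a way to remove the last color. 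Absent a genuinely new idea for $3$-uniform diagonal Ramsey numbers, Conjecture~\ref{conj3} is out of reach --- the contribution of Theorem~\ref{main} is to isolate this single obstacle and to show that nothing else stands in the way.
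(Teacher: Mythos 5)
This statement is a \emph{conjecture} in the paper, not a theorem, and the paper offers no proof of it; your write-up correctly recognizes this and, via Theorem~\ref{main} with $t=1$, $s_1=k+1$, $q=2$, derives exactly the equivalence with Conjecture~\ref{conj1} that the paper itself records as Corollary~\ref{equiv}. You have not proved the conjecture --- nobody has --- but your reduction and your identification of the lone remaining obstacle (a doubly-exponential lower bound for $r_3(m,m)$, then stepping up) match the paper's own treatment precisely.
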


For $t = 1$, $q = 2$, and $s_1 = k + 1$ in Theorem \ref{main}, we have $r_{k-1}(n/2,n/2) \leq r_k(P_{k + 1},n) \leq r_{k-1}(n,n)$.  Hence, we obtain the following corollary which relates $r_k(P_{k+1}, n)$ to the diagonal Ramsey number $r_k(n,n)$.

\begin{corollary}\label{equiv}
Conjecture \ref{conj1} holds if and only if Conjecture \ref{conj3} holds.
\end{corollary}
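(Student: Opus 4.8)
The plan is to read off both implications directly from the two-sided bound of Theorem~\ref{main}. Specializing that theorem to $t=1$, $s_1 = k+1$, so that $q = s_1 - k + 1 = 2$, and using that $r_{k-1}(m;2)$ is by definition the diagonal two-color Ramsey number $r_{k-1}(m,m)$, I obtain for every $k \geq 2$ and every $n$
$$r_{k-1}(\lfloor n/2\rfloor, \lfloor n/2\rfloor) \;\le\; r_k(P_{k+1}, n) \;\le\; r_{k-1}(n,n).$$
Everything then reduces to interpreting this chain in the two directions while keeping track of the asymptotic notation.

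For the forward direction I would assume Conjecture~\ref{conj1} and fix $k \geq 4$, so that $k-1 \geq 3$ and Conjecture~\ref{conj1} is available at uniformity $k-1$: there is $c' > 0$ with $r_{k-1}(m,m) \geq \twr_{k-1}(c'm)$ for all $m$. Substituting $m = \lfloor n/2\rfloor$ into the lower bound above yields
$$r_k(P_{k+1}, n) \;\ge\; r_{k-1}(\lfloor n/2\rfloor, \lfloor n/2\rfloor) \;\ge\; \twr_{k-1}(c'\lfloor n/2\rfloor) \;\ge\; \twr_{k-1}(cn)$$
for a suitable $c>0$ and all large $n$, which is exactly Conjecture~\ref{conj3}.

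For the reverse direction I would assume Conjecture~\ref{conj3}, fix $m \geq 3$, and set $k = m+1 \geq 4$. Conjecture~\ref{conj3} then provides $c'' > 0$ with $r_k(P_{k+1},n) \geq \twr_{k-1}(c''n) = \twr_m(c''n)$, and the upper bound above gives $r_m(n,n) = r_{k-1}(n,n) \geq r_k(P_{k+1},n) \geq \twr_m(c''n)$, which is Conjecture~\ref{conj1} at uniformity $m$; since $m \geq 3$ was arbitrary, Conjecture~\ref{conj1} follows.

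I do not expect a genuine obstacle here once Theorem~\ref{main} is in hand. The only point requiring care is the bookkeeping of indices: Conjecture~\ref{conj3} at uniformity $k$ matches Conjecture~\ref{conj1} at uniformity $k-1$ (the tower heights agree, both being $k-1 = m$), and the passage from $\lfloor n/2\rfloor$ to $n$ inside $\Omega(\cdot)$ is harmless, costing only a constant factor in the innermost argument of the tower. In fact the same argument shows the finer, level-by-level equivalence: for each fixed $m \geq 3$, the statement $r_m(n,n) \geq \twr_m(\Omega(n))$ is equivalent to the statement $r_{m+1}(P_{m+2}, n) \geq \twr_m(\Omega(n))$.
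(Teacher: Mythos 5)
Your proof is correct and takes exactly the route the paper does: it reads both implications off the two-sided inequality $r_{k-1}(\lfloor n/2\rfloor,\lfloor n/2\rfloor)\le r_k(P_{k+1},n)\le r_{k-1}(n,n)$ obtained by specializing Theorem~\ref{main} to $t=1$, $s_1=k+1$, $q=2$, with the index shift $k\leftrightarrow k-1$ handled correctly so that Conjecture~\ref{conj1} at uniformity $m$ matches Conjecture~\ref{conj3} at uniformity $m+1$. The paper states this as immediate; you have simply supplied the straightforward bookkeeping.
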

Corollary~\ref{equiv} shows that our lack of understanding of the Ramsey number $r_k(P_{k + 1}, n)$ is due to our lack of understanding of the diagonal Ramsey number $r_{k -1}(n,n)$.  However, if we add one additional color, then Theorem~\ref{main} with $t=2$ implies that $r_k(P_{k + 1}, P_{k + 1}, n)$ does indeed grow as a tower of height $k-1$ in $\Omega(n)$.

\begin{corollary} \label{3colors} There is a positive constant $c>0$ such that for $k \geq 4$ and $n>3k$,
$$r_k(k + 1,k + 1,n) \geq r_k(P_{k  + 1}, P_{k  + 1}, n) \geq r_{k-1}\left(\frac{n}{4},\frac{n}{4},\frac{n}{4},\frac{n}{4}\right) \geq \twr_{k-1}(cn).$$
\end{corollary}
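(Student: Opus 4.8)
The plan is to obtain Corollary~\ref{3colors} by specializing the Main Result (Theorem~\ref{main}) to $t=2$ and $s_1=s_2=k+1$, and then feeding in the known lower bound for the four-color diagonal Ramsey number of $(k-1)$-uniform hypergraphs. The three inequalities in the displayed chain are handled separately, and each is short.

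\textbf{First inequality ($r_k(k+1,k+1,n)\ge r_k(P_{k+1},P_{k+1},n)$).} For any ordered set of $k+1$ vertices $v_1<\cdots<v_{k+1}$, the tight path $P_{k+1}^{(k)}$ consists only of the two edges $\{v_1,\dots,v_k\}$ and $\{v_2,\dots,v_{k+1}\}$, both of which lie in $K_{k+1}^{(k)}$; hence every copy of $K_{k+1}^{(k)}$ contains a copy of $P_{k+1}^{(k)}$ (in the induced order). Therefore any $3$-coloring of $K_N^{(k)}$ that produces a color-$i$ copy of $K_{k+1}^{(k)}$ for some $i\in\{1,2\}$ or a color-$3$ copy of $K_n^{(k)}$ also produces a color-$i$ copy of $P_{k+1}^{(k)}$ or a color-$3$ copy of $K_n^{(k)}$. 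Taking $N=r_k(k+1,k+1,n)$, every coloring of $K_N^{(k)}$ has the clique outcome, hence the tight-path outcome, so $r_k(P_{k+1},P_{k+1},n)\le r_k(k+1,k+1,n)$.

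\textbf{Second inequality (applying Theorem~\ref{main}).} With $t=2$ and $s_1=s_2=k+1$ one has $q=(s_1-k+1)(s_2-k+1)=2\cdot 2=4$, and the lower bound in Theorem~\ref{main} gives
\[
r_k(P_{k+1},P_{k+1},n)\;\ge\; r_{k-1}\!\left(\lfloor n/4\rfloor;4\right)\;=\; r_{k-1}\!\left(\lfloor n/4\rfloor,\lfloor n/4\rfloor,\lfloor n/4\rfloor,\lfloor n/4\rfloor\right).
\]

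\textbf{Third inequality (the four-color bound).} Erd\H os and Hajnal \cite{EH72} proved $r_3(m,m,m,m)\ge 2^{2^{\Omega(m)}}=\twr_3(\Omega(m))$, and their stepping-up lemma upgrades a bound of the form $r_j(m,m,m,m)\ge \twr_j(\Omega(m))$ to $r_{j+1}(m,m,m,m)\ge \twr_{j+1}(\Omega(m))$ for every $j\ge 3$. Iterating this from $j=3$ up to $j=k-1$ (which is legitimate since $k\ge 4$) yields an absolute constant $c'>0$ with $r_{k-1}(m,m,m,m)\ge \twr_{k-1}(c'm)$. Setting $m=\lfloor n/4\rfloor$ and using $n>3k\ge 12$ (so that $m\ge n/4-1\ge n/8$) gives $r_{k-1}(\lfloor n/4\rfloor,\dots,\lfloor n/4\rfloor)\ge \twr_{k-1}(cn)$ with $c=c'/8$. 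Chaining the three inequalities proves the corollary.

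Essentially all the difficulty is already encapsulated in Theorem~\ref{main} (whose lower bound supplies the color-reduction mechanism) and in the classical four-color stepping-up bound of \cite{EH72}; the only points requiring a little care are the bookkeeping of the floor $\lfloor n/4\rfloor$ into the absolute constant $c$, and checking that the stepping-up iteration is started at uniformity $3$ and run exactly $k-4$ times, which is what the hypothesis $k\ge4$ permits. No genuine obstacle remains beyond these routine checks.
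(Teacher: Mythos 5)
Your proof is correct and takes essentially the same route as the paper: the first inequality is the trivial monotonicity $P_{k+1}^{(k)}\subseteq K_{k+1}^{(k)}$, the second is Theorem~\ref{main} with $t=2$, $s_1=s_2=k+1$, $q=4$, and the third is the Erd\H os--Hajnal four-color lower bound $r_{k-1}(m,m,m,m)\ge\twr_{k-1}(\Omega(m))$ for $k-1\ge 3$.
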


Note that by the results of Erd\H os and Rado~\cite{ER}, for every $k\ge 4$, there is an $c_k>0$
such that $r_k(k + 1,k + 1,n)\le \twr_{k-1}(n^{c_k})$.

In the next Section, we prove Theorem~\ref{main} and the inequality $r_3(P_4,n) > 2^{\Omega(n)}$ from Theorem~\ref{mainlower}.  In Sections \ref{secondlast} and \ref{last}, we study several related Ramsey problems.  We sometimes omit floor and ceiling signs whenever they are not crucial for the sake of clarity of presentation.

\section{Ramsey numbers for tight paths versus cliques}\label{pathvsclique}

In this section, we prove Theorem~\ref{main}.

{\bf Proof of Theorem~\ref{main}.}
Let us first prove the upper bound.  Set $q_i = s_i - k + 1$ so that $q = q_1\cdots q_t$, and $N = r_{k -1}(n;q)$.  Let $\chi:{[N]\choose k} \rightarrow \{1,2,\ldots, t + 1\}$ be a $(t + 1)$-coloring of the edges of $K^{(k)}_N$.  We will show that $\chi$ must produce a monochromatic copy of $P^{(k)}_{s_i}$ in color $i$, for some $i$, or a monochromatic copy of $K^{(k)}_n$ in color $t + 1$.

Define $\phi:{[N]\choose k-1} \rightarrow \mathbb{Z}^t$, where for $v_1,\ldots, v_{k - 1} \in [N]$, $v_1 <  \cdots < v_{k-1}$, we have $\phi(v_1,\ldots, v_{k-1}) = (a_1,\ldots, a_t)$, where $a_i$ is the \emph{length} of the longest monochromatic tight-path in color $i$ ending with $v_1,\ldots, v_{k-1}$.  If $a_i \geq q_i$ for some $i$, then we would be done since this implies we have a monochromatic copy of $P^{(k)}_{s_i}$ in color $i$.  Therefore, we can assume $a_i \in \{0,1,\ldots, q_i-1\}$ for all $i$, and hence $\phi$ uses at most $q$ colors.

Since $N = r_{k-1}(n;q)$, there is a subset $S\subset [N]$ of $n$ vertices such that $\phi$ colors every $(k-1)$-tuple in $S$ the same color, say with color $(b_1,\ldots, b_t)$.  Then notice that for every $k$-tuple $(v_1,\ldots, v_k) \in {S\choose k}$, we have $\chi(v_1,\ldots, v_k) = t + 1$.  Indeed, suppose there are $k$ vertices $v_1,\ldots, v_k \in S$ such that $\chi(v_1,\ldots, v_k) = i$, where $i \leq t$.  Since the longest monochromatic tight-path in color $i$ ending with vertices $v_1,\ldots, v_{k-1}$ is $b_i$, this implies that the longest monochromatic tight-path in color $i$ ending with vertices $v_2,\ldots, v_k$ is at least $b_i + 1$, a contradiction.  Therefore, $S$ induces a monochromatic copy of $K^{(k)}_n$ in color $t + 1$.   This concludes the proof of the upper bound
\medskip

We now prove the lower bound. Set $N = r_{k -1}(\lfloor n/q\rfloor ;q) - 1$ and $q_i = s_i - k + 1$, so that $q = q_1\cdots q_t$. Let $K^{(k-1)}_N$ be the complete $N$-vertex $(k-1)$-uniform hypergraph with vertex set $[N]$. Next, let $$\phi:{{N}\choose k-1}\rightarrow [q_1]\times\cdots \times [q_t]$$ be a $q$-coloring on the edges of $K^{(k-1)}_N$, that does not produce a monochromatic copy of $K^{(k-1)}_{\lfloor n/q\rfloor}$.  Such a coloring $\phi$ exists since $N = r_{k -1}(\lfloor n/q\rfloor ;q) - 1$. We now define a $(t + 1)$-coloring
$$\chi:{[N]\choose k} \rightarrow [t+1]$$ on the $k$-tuples of $[N]$ as follows. For $v_1,\ldots, v_k \in [N]$, where $v_1 < \cdots < v_k$, let $\chi(v_1,\ldots, v_k) = i$ if and only if for $\phi(v_1,\ldots, v_{k - 1}) = (a_1,\ldots, a_{t})$ and $\phi(v_2,\ldots, v_k) = (b_1,\ldots, b_{t})$, $i$ is the minimum index such that $a_i < b_i$.  If no such $i$ exists, then $\chi(v_1,\ldots, v_k) = t+1$.  We will show that $\chi$ does not produce a monochromatic $i$-colored copy of $P^{(k)}_{s_i}$, for $i \leq t$, nor a monochromatic $(t+1)$-colored copy of $K^{(k)}_{n}$.

First, suppose that the coloring $\chi$ produces a monochromatic $P_{s_i}^{(k)}$ in color $i$.  That is, there are $s_i$ vertices $v_1, v_2, \ldots, v_{s_i} \in [N]$, $v_1 < \cdots < v_{s_i}$, such that $\chi(v_j,v_{j + 1},\ldots, v_{j + k - 1}) = i$ for $j = 1,\ldots, s_i -k  + 1$.  Let $\phi(v_j,v_{j + 1},\ldots, v_{j + k - 2}) = (a_{j,1},\ldots, a_{j,t})$, for $j = 1,\ldots, s_i - k + 2$.  Then we have

$$a_{1,i} < a_{2,i} < \cdots < a_{s_i - k + 2, i},$$
 which is a contradiction since $q_i  < s_i - k + 2$.  Hence, $\chi$ does not produce a monochromatic $P^{(k)}_{s_i}$ in color $i$ for $i \leq t$.

Next, we show that $\chi$ does not produce a monochromatic copy of $K^{(k)}_n$ in color $t + 1$.  Again, for sake of contradiction, suppose there is a set $S\subset [N]$ where $S = \{v_1 , \ldots, v_n\}$, $v_1 < \cdots < v_n$, such that $\chi$ colors every $k$-tuple of $S$ with color $t + 1$.  We obtain a contradiction from the following claim.

\begin{claim}

Let $S = \{v_1,\ldots, v_n\}$, $\chi$, and $\phi$ be as above, and $1 \leq \ell \leq q$.  If $\phi$ uses at most $\ell$ distinct colors on ${S\choose k-1}$, and if $\chi$ colors every $k$-tuple of $S$ with color $t  + 1$, then there is a subset $T\subset S$ of size $\lfloor n/\ell\rfloor$ and a color $a=(a_1,\ldots, a_t)$
such that $\phi(T')=a$ for every
$T' \in {T\choose k-1}$.

\end{claim}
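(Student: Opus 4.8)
The plan is to prove the Claim by induction on $\ell$. The first step is to translate the hypothesis into a usable form: since $\chi$ assigns color $t+1$ to every $k$-tuple of $S$, the definition of $\chi$ says exactly that for all $v_1<\cdots<v_k$ in $S$ we have $\phi(v_1,\ldots,v_{k-1})\ge \phi(v_2,\ldots,v_k)$ coordinate-wise. Equivalently, $\phi$ cannot increase in any coordinate when a $(k-1)$-tuple is shifted one step to the right, i.e.\ when its smallest vertex is deleted and a larger vertex of $S$ is appended; iterating, $\phi(A)\ge\phi(A')$ whenever $A'$ is obtained from $A$ by any sequence of such right-shifts. This monotonicity is the only property of $\phi|_S$ that I will use.

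The base case $\ell=1$ is immediate: $\phi$ is constant on $\binom{S}{k-1}$ and we take $T=S$. For the inductive step let $\ell\ge 2$; if $\phi$ uses at most $\ell-1$ colors on $\binom{S}{k-1}$ we are already done by the inductive hypothesis (since $\lfloor n/(\ell-1)\rfloor\ge\lfloor n/\ell\rfloor$), so assume $\phi$ uses exactly $\ell$ of them. As the color set $[q_1]\times\cdots\times[q_t]$ is a finite poset under the coordinate-wise order, I would choose a color $a=(a_1,\ldots,a_t)$ that is maximal among those used by $\phi$ on $\binom{S}{k-1}$, and set
\[ J \ :=\ \bigl\{\, v\in S \ :\ v=\min(B)\ \text{for some}\ B\in\binom{S}{k-1}\ \text{with}\ \phi(B)=a \,\bigr\}. \]

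Now I claim two things about $J$. First, $\phi\equiv a$ on $\binom{J}{k-1}$: given vertices $v_{i_1}<\cdots<v_{i_{k-1}}$ of $J$, the membership $v_{i_{k-1}}\in J$ furnishes an $a$-colored $(k-1)$-tuple of the form $B=(v_{i_{k-1}},v_{z_1},\ldots,v_{z_{k-2}})$ with $i_{k-1}<z_1<\cdots<z_{k-2}$; writing the $2k-3$ vertices $v_{i_1}<\cdots<v_{i_{k-1}}<v_{z_1}<\cdots<v_{z_{k-2}}$ in order and running the monotonicity through the $k-1$ consecutive $(k-1)$-element windows of this list (the first window being $(v_{i_1},\ldots,v_{i_{k-1}})$ and the last being $B$) gives $\phi(v_{i_1},\ldots,v_{i_{k-1}})\ge \phi(B)=a$, hence $\phi(v_{i_1},\ldots,v_{i_{k-1}})=a$ by maximality of $a$. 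Second, $\phi$ never takes the value $a$ on $\binom{S\setminus J}{k-1}$: if $\phi(B)=a$ then $\min(B)\in J$ by definition, so $B\not\subseteq S\setminus J$. Consequently $\phi$ uses at most $\ell-1$ colors on $\binom{S\setminus J}{k-1}$, the right-shift monotonicity persists on $S\setminus J$, and the inductive hypothesis produces $T'\subseteq S\setminus J$ with $\phi$ constant on $\binom{T'}{k-1}$ and $|T'|\ge\bigl\lfloor(n-|J|)/(\ell-1)\bigr\rfloor$.

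To finish I would take $T$ to be the larger of $J$ and $T'$ (restricted to exactly $\lfloor n/\ell\rfloor$ vertices), using the elementary inequality $\max\{x,\ \lfloor(n-x)/(\ell-1)\rfloor\}\ge\lfloor n/\ell\rfloor$ valid for every integer $0\le x\le n$: if $x\ge\lfloor n/\ell\rfloor$ the first term suffices, and if $x<\lfloor n/\ell\rfloor\le n/\ell$ then $(n-x)/(\ell-1)>n/\ell\ge\lfloor n/\ell\rfloor$ forces the (integer) second term to be at least $\lfloor n/\ell\rfloor$. The step I expect to be most delicate is the first claim about $J$ — picking the right auxiliary list of $2k-3$ vertices and making sure it is the coordinate-wise \emph{maximality} of $a$, not just the monotonicity, that upgrades ``$\ge a$'' to ``$=a$'' — together with a sanity check of the degenerate case $k=2$, where $\binom{S}{k-1}$ is a coloring of the vertices themselves and the argument collapses to the classical monotone-subsequence pigeonhole. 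Everything else is routine bookkeeping with floors.
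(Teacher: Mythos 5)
Your proof is correct, but it takes a genuinely different route from the paper's. Both arguments rest on the same starting observation — that $\chi\equiv t+1$ on $\binom{S}{k}$ is exactly the statement that $\phi$ is coordinate-wise non-increasing under right-shifts — and both proceed by induction on $\ell$, but the decomposition is different. The paper fixes the partition in advance, splitting $S$ into a prefix $S_1$ of size $n-\lfloor n/\ell\rfloor$ and a suffix $S_2$ of size $\lfloor n/\ell\rfloor$, and then picks the \emph{lexicographically smallest} color $a^*$ used on $\binom{S}{k-1}$; if $a^*$ appears on some $(k-1)$-tuple of $S_1$, the shift-monotonicity together with the fact that coordinate-wise $\le a^*$ forces equality (because $a^*$ is lex-minimal) pushes $a^*$ onto every $(k-1)$-tuple of $S_2$, and otherwise $S_1$ already uses $\le\ell-1$ colors and the induction hypothesis applies to $S_1$ directly, so $T=S_2$ or $T\subset S_1$ hands over the desired set with no further bookkeeping. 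Your version instead picks a \emph{coordinate-wise maximal} color $a$ and lets the coloring determine the partition, taking $J$ to be the set of left endpoints of $a$-colored $(k-1)$-tuples; the $(2k-3)$-vertex window argument shows $\phi\equiv a$ on $\binom{J}{k-1}$, $\phi$ avoids $a$ on $\binom{S\setminus J}{k-1}$, and then $\max\{|J|,\lfloor(n-|J|)/(\ell-1)\rfloor\}\ge\lfloor n/\ell\rfloor$ finishes. Both choices of extremal color serve the same purpose of converting a one-sided coordinate-wise inequality into an equality, and the two approaches are comparably short; the paper's fixed split avoids your final case analysis on $|J|$, while your dynamic split $J$ versus $S\setminus J$ avoids the paper's two explicit cases. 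One small presentational remark: when you write that you ``run the monotonicity through the $k-1$ consecutive windows,'' it is worth stating explicitly that consecutive windows span a $k$-tuple of $S$, so that the hypothesis on $\chi$ applies — that is the one place where the $S$-membership of the auxiliary vertices $v_{z_j}$ is actually used.
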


\noindent The contradiction follows from the fact that $\lfloor n/\ell \rfloor \geq \lfloor n/q\rfloor$, and $\phi$ does not produce a monochromatic copy of $K^{(k-1)}_{\lfloor n/q\rfloor}$.

\medskip

 \emph{Proof of Claim}.  We proceed by induction on $\ell$.  The base case $\ell = 1$ is trivial.  For the inductive step, assume that the statement holds for $\ell' < \ell$.  Let $\mathcal{C}$ be the set of $\ell$ distinct colors defined by $\phi$ on ${S\choose k-1}$, and let $(a_1^{\ast},\ldots, a^{\ast}_t) \in \mathcal{C}$ be the smallest element in $\mathcal{C}$ with respect to the lexicographic ordering.  We set $S_1 = \{v_1,\ldots, v_{ n - \lfloor n/\ell\rfloor}\}$ and $S_2 = \{v_{ n - \lfloor n/\ell\rfloor + 1}  , \ldots, v_n\}$. The proof now falls into two cases.

\medskip

\noindent \emph{Case 1.}  Suppose there is a $(k-1)$-tuple $(u_1,\ldots, u_{k-1}) \in {S_1\choose k-1}$ such that $\phi(u_1,\ldots, u_{k-1}) = (a^{\ast}_1,\ldots, a^{\ast}_t)$.  Then we have $\phi(T')= (a^{\ast}_1,\ldots, a^{\ast}_t)$ for all
$T' \in {S_2\choose k-1}$.  Indeed let $T'=(w_1,\ldots, w_{k-1}) \in {S_2\choose k-1}$.  Since $\chi(u_1,\ldots, u_{k-1},w_1) = t + 1$, we have $\phi(u_2,\ldots, u_{k-1}, w_1) = (a^{\ast}_1,\ldots, a^{\ast}_t)$.  Likewise, since we have $\chi(u_2,\ldots, u_{k-1},w_1,w_2) = t + 1$, we have $\phi(u_3,\ldots, u_{k-1}, w_1, w_2) = (a^{\ast}_1,\ldots, a^{\ast}_t)$.  By repeating this argument $k - 3$ more times, $\phi(w_1,\ldots, w_{k-1}) = (a^{\ast}_1,\ldots, a^{\ast}_t)$.

\medskip

\noindent \emph{Case 2.}  If we are not in Case 1, then $\phi(T') \in \mathcal{C}\setminus \{(a^{\ast}_1,\ldots, a^{\ast}_t)\}$ for every $T' \in {S_1 \choose k-1}$.  Hence $\phi$ uses at most $\ell - 1$ distinct colors on ${S_1\choose k-1}$.  By the induction hypothesis, there is a subset $T \subset S_1$ of size $(n - \lfloor n/\ell\rfloor)/(\ell - 1) \geq \lfloor n/\ell\rfloor$
and a color $a=(a_1,\ldots, a_t)$ such that $\phi(T')=a$ for every $T' \in {T \choose k-1}$.
This concludes the proof of the claim and the theorem. \qed

\bigskip

\noindent \textbf{Lower bound construction for $r_3(P_4, n)$ in Theorem~\ref{mainlower}}.  Set $N = 2^{cn}$ where $c$ will be determined later.  Consider the coloring $\phi:{[N]\choose 2} \rightarrow \{1,2\}$, where each edge has probability $1/2$ of being a particular color independent of all other edges.  Using $\phi$, we define the coloring $\chi:{[N]\choose 3}\rightarrow \{\textnormal{red},\textnormal{blue}\}$, where the triple $(v_1,v_2,v_3)$, $v_1 < v_2 < v_3$, is red if $\phi(v_1,v_2) < \phi(v_2,v_3)$, and is blue otherwise.  It is easy to see that $\chi$ does not produce a monochromatic red copy of $P^{(3)}_4$.

Next we estimate the expected number of monochromatic blue copies of $K^{(k)}_n$ in $\chi$.  For a given triple $\{v_1,v_2,v_3\} \in {[N]\choose 3}$, the probability that $\chi(v_1,v_2,v_3) = \textnormal{blue}$ is  $3/4$.  Let $T = \{v_1,\ldots, v_n\}$ be a set of $t$ vertices in $[N]$, where $v_1 < \cdots < v_n$.  Let $S$ be a partial Steiner $(n,3,2)$-system with vertex set $T$, that is, $S$ is a 3-uniform hypergraph such that each $2$-element set of vertices is contained in at most one edge in $S$.  Moreover, $S$ satisfies $|S| = c'n^{2}$.  It is known that such a system exists. Then the probability that every triple in $T$ is blue is at most the probability that every triple in $S$ is blue.  Since the edges in $S$ are independent, that is no two edges have more than one vertex in common, the probability that $T$ is a monochromatic blue clique is at most $\left( \frac{3}{4}\right)^{|S|} \leq \left(\frac{3}{4}\right)^{c'n^{2}}$.  Therefore the expected number of monochromatic blue copies of $K^{(k)}_n$ produced by $\chi$ is at most

$${N\choose n}\left( \frac{3}{4}\right)^{c'n^{2}} < 1,$$

\noindent for an appropriate choice for $c$.  Hence, there is a coloring $\chi$ with no monochromatic red copy of $P_4^{(3)}$, and no monochromatic blue copy of $K^{(k)}_n$.  Therefore

$$r_3(P_4,n) > 2^{\Omega(n)}.$$

\section{A Ramsey-type result for nonincreasing sets}\label{secondlast}

 Notice that the proof of Theorem \ref{main} does not require the full strength of Ramsey's theorem.  For example when $t = 1$, rather than finding $n$ vertices with the property that every $(k-1)$-tuple has the same color, it is enough to find a set $T$ of $n$ vertices such that for any subset of $k$ vertices $v_1, \ldots, v_{k} \in T$, where $v_1 < \cdots < v_{k}$, we have $\phi(v_1,\ldots, v_{k-1}) \geq \phi(v_2,\ldots, v_{k })$.

Motivated by the observation above, we study the following variant of $r_k(n;q)$.  Let $\chi:{[N]\choose k}\rightarrow\{1,\ldots, q\}$ be a $q$-coloring on the $k$-tuples of $[N]$ with colors $\{1,2,\ldots, q\}$.  For $T\subset [N]$, we say that $T$ is \emph{nonincreasing}, if for any set of $k + 1$ vertices $v_1, \ldots, v_{k + 1} \in T$, where $v_1 < \cdots < v_{k + 1}$, we have $\chi(v_1,\ldots, v_k) \geq \chi(v_2,\ldots, v_{k  + 1})$.  Let $f_k(n;q)$ be the minimum integer $N$, such that for any $q$-coloring on the $k$-tuples of $[N]$, with colors $\{1,2,\ldots, q\}$, contains a nonincreasing set $T$ of size $n$.  Clearly we have $f_k(n;q) \leq r_k(n;q)$, and recall that the best known upper bound for $r_2(n,n)$ is $4^{n-o(n)}$.  Our next result makes the following improvement in this fundamental case.

\begin{theorem}
We have $f_2(n;2) \leq \left\lceil\left(2 + \sqrt{2}\right)^n\right\rceil \approx (3.414)^n$.

\end{theorem}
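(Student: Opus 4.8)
The plan is to set up a recurrence that tracks, for a $2$-coloring $\chi$ of the pairs of $[N]$, the lengths of the longest nonincreasing sets ending at each vertex, and to show that if $N$ is too large some vertex must be the endpoint of a long one. Recall that a set $T = \{v_1 < \cdots < v_m\}$ is nonincreasing when $\chi(v_j, v_{j+1}) \ge \chi(v_{j+1}, v_{j+2})$ for all $j$; since there are only two colors $\{1,2\}$, such a sequence of edge-colors looks like a block of $2$'s followed by a block of $1$'s. So it is natural to keep two parameters at each vertex $v$: let $a(v)$ be the length of the longest nonincreasing set ending at $v$ whose last edge has color $2$ (equivalently, that can still be extended ``on the left'' by a $2$-edge), and $b(v)$ the length of the longest nonincreasing set ending at $v$ whose last edge has color $1$ (already ``committed'' to $1$'s). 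First I would record the monotonicity relations: for $u < v$ with $\chi(u,v) = 2$ we get $a(v) \ge a(u) + 1$; for $u < v$ with $\chi(u,v) = 1$ we get $b(v) \ge \max(a(u), b(u)) + 1$. Also $b(v) \ge a(v)$ is false in general, but one always has the longest nonincreasing set ending at $v$ has length $\max(a(v), b(v))$.

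The key step is a counting/extremal argument: assign to each vertex $v$ the pair $(a(v), b(v))$ and bound how many vertices can share the same pair, or more precisely bound $N$ in terms of the maximum values of $a$ and $b$. Here I expect the golden-ratio-flavoured constant $2+\sqrt2$ to come from solving the optimization governing how a budget on $\max(a(v),b(v)) \le n-1$ splits between the ``color-2 prefix'' and the ``color-1 suffix''. Concretely, I would try to prove a bound of the shape $N \le \sum_{i+j \le n}\binom{\text{something}}{i}\binom{\text{something}}{j}$ or run a direct recursion $g(n) = $ (max $N$ avoiding a nonincreasing $n$-set), splitting on the color of edges incident to the last vertex: a vertex $v$ with at least one incoming $2$-edge and whose best nonincreasing set has length $< n$ forces the $2$-neighbourhood below it to avoid length $n-1$, while the $1$-neighbourhood must avoid having a nonincreasing set of length $n-1$ ending inside it. Balancing the two sub-instances — one of ``type $a$'' with parameter reduced by $1$ and one of ``type $b$'' — should give a two-term linear recurrence whose characteristic root is $2+\sqrt2$; alternatively one gets a closed form like $\sum_{j=0}^{n} \binom{n}{j} 2^{?}$ that sums to $(2+\sqrt2)^n$ after recognising $\sum_j \binom nj (\sqrt2)^j \cdot (\text{stuff})$.

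I would organize the write-up as: (1) reduce to the two-parameter labelling $(a(v),b(v))$ and prove the two monotonicity inequalities above; (2) argue that if no nonincreasing set of size $n$ exists then $a(v) + b(v)$ (or $\max$, depending on which sharpens the constant) is bounded, and that vertices with equal labels form a structured family; (3) set up the recurrence $F(p,q) \le$ (bound when the longest color-2 extension is $\le p$ and color-1 extension is $\le q$), solve it to get $F(p,q) \le c\binom{p+q}{p}\lambda^{\cdots}$-type estimate, and optimize over $p + q \le n-1$; (4) conclude $N < (2+\sqrt2)^n$, hence $f_2(n;2) \le \lceil(2+\sqrt2)^n\rceil$. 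The main obstacle will be step (3): getting the constant exactly $2+\sqrt2$ rather than something weaker like $3^n$ or $2^{2n}$ requires the recursion to be set up so that the two color-classes are handled asymmetrically in just the right way — the ``already committed to color $1$'' vertices behave differently from the ``still flexible'' ones — and verifying that the extremal split of the length budget yields the quadratic $x^2 - 2x - 2 = 0$ (roots $1 \pm \sqrt3$, so this particular quadratic is not quite it) or more likely that $\sum_j\binom nj(\sqrt 2)^{\,j}\cdot(\text{binomial tail})$ telescopes to $(1+(1+\sqrt2))^n$. Pinning down that algebraic identity cleanly, and making sure the greedy/recursive bound is tight enough not to lose a constant per step, is where the real work lies.
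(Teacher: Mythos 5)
The proposal misreads the definition of a nonincreasing set, and this derails the whole plan. You write that a set $T=\{v_1<\cdots<v_m\}$ is nonincreasing ``when $\chi(v_j,v_{j+1})\ge\chi(v_{j+1},v_{j+2})$ for all $j$,'' i.e.\ you only impose the constraint on consecutive triples. The paper's definition requires $\chi(v_i,v_j)\ge\chi(v_j,v_\ell)$ for \emph{every} ordered triple $i<j<\ell$ in $T$, not just consecutive ones. These are very different conditions. Under your weaker (consecutive) definition, the two-parameter labelling $(a(v),b(v))$ really does satisfy the monotonicity relations you wrote, and in fact no two vertices can share a label: if $u<v$ and $\chi(u,v)=2$ then $a(v)>a(u)$, and if $\chi(u,v)=1$ then $b(v)>b(u)$. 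That gives $N\le(n-1)^2$, i.e.\ a polynomial bound, which is obviously too strong to be the right answer and signals that you are solving a different problem. Under the actual (all-triples) definition, the relations fail: e.g.\ if $a(u)$ is witnessed by a set $T'$ and $\chi(u,v)=2$, the non-consecutive edges $\chi(w,v)$ for $w\in T'\setminus\{u\}$ are unconstrained, so $T'\cup\{v\}$ need not be nonincreasing. So your step (1) already breaks.

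Beyond that, the proposal is a plan, not a proof: steps (2)--(4) are left as ``I would try,'' ``should give,'' ``more likely that,'' and you explicitly say you do not know which algebraic identity produces $2+\sqrt2$ (and test the wrong quadratic, $x^2-2x-2$, noting yourself it ``is not quite it''; the relevant one is $x^2-4x+2=0$, with root $2+\sqrt2$). The paper's proof is shorter and structurally different: it is a direct induction on $n$. With $L=\lceil(2+\sqrt2)^{n-1}\rceil$, either some vertex $v$ has at least $L$ smaller neighbours in colour $1$ --- then a nonincreasing $(n-1)$-set among them can be \emph{appended} by $v$, since $1$ is the minimum colour and so every new triple $(w,w',v)$ trivially satisfies $\chi(w,w')\ge 1=\chi(w',v)$ --- or every back-degree in colour $1$ is $<L$, a count of colour-$2$ edges and pigeonhole produce a vertex $u$ with at least $L$ larger neighbours in colour $2$, and a nonincreasing $(n-1)$-set among those can be \emph{prepended} by $u$, since $2$ is the maximum colour. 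The threshold $L$ is chosen so that the pigeonhole count goes through, and that optimization is exactly where $\alpha^2-4\alpha+2\ge 0$ and hence $2+\sqrt2$ appears. Your framework, even if repaired to use the correct definition, would need a fundamentally new idea (not just the labelling) to get anywhere.
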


\begin{proof}

We proceed by induction on $n$.  The base case $n  =1$ is trivial.  Suppose now the statement holds for $n' < n$.  Set $N = \left\lceil\left(2 + \sqrt{2}\right)^n\right\rceil$, and let $\chi:{[N]\choose 2} \rightarrow \{1,2\}$.  We will show that there is a nonincreasing subset of size $n$.  Suppose there is a vertex $v \in [N]$ and a subset $S_v\subset \{1,\ldots, v-1\}$ such that $|S_v| \geq \left\lceil\left(2 + \sqrt{2}\right)^{n-1}\right\rceil$, and for every $u \in S_v$ we have $\chi(u,v) = 1$.  By the induction hypothesis, $S_v$ contains a nonincreasing set of size $n-1$, and together with $v$ we have a nonincreasing set of size $n$ and we are done.  Therefore we can assume no such vertex $v \in [N]$ exist.

Set $L = \left\lceil\left(2 + \sqrt{2}\right)^{n-1}\right\rceil$, and let $E_2\subset {[N]\choose 2}$ denote the set of pairs in $[N]$ with color 2, and whose left endpoint lies in $\{1,\ldots, N - L\}$.  For each $v \in [N]$, let $d_2(v)$ denote the number of edges in $E_2$ whose right endpoint is $v$.  By the assumption above, the back degree of each $v \in [N]$ in color 1 is at most $L - 1$, which implies $d_2(v) \geq v - 1 - (L - 1) = v - L$. Thus we have

$$\begin{array}{ccl}
    |E_2| & \geq &  \sum\limits_{v = L + 1}^N d_2(v)   \\\\
      & \geq & 1 + 2 + \cdots + (N - 2L + 2) + (L - 1) (N - 2L + 2)   \\\\
      & \geq & \frac{N(N - 2L + 2)}{2}.
  \end{array}$$

\noindent By the pigeonhole principle, there is a vertex $u \in \{1,\ldots,  N - L\}$ and a set $T\subset \{u + 1,\ldots, N\}$, such that

$$\begin{array}{ccl}
    |T| & \geq &  \frac{N(N - 2L + 2)}{2(N -L)} \\\\
     &  =  & \frac{N(N - 2L)}{2(N - L)} + \frac{N}{N - L} \\\\
     & \geq  & \left(\frac{\sqrt{2}}{1 + \sqrt{2}}\frac{N}{2}\right)  + \left(\frac{2 + \sqrt{2}}{1 + \sqrt{2}}\right) \\\\
 & \geq & (2 + \sqrt{2})^{n - 1} + \left(\frac{2 + \sqrt{2}}{1 + \sqrt{2}}\right),
  \end{array}$$

\noindent and for all $v \in T$ we have $\chi(u,v) = 2$.  Hence, $|T|\geq \left\lceil\left(2 + \sqrt{2}\right)^{n-1}\right\rceil$.  By the induction hypothesis, we can find a nonincreasing set in $T$ of size $n-1$, and together with $u$ we have a nonincreasing set of size $n$. \end{proof}

\begin{corollary}
We have $r_3(P_4, n) \leq \left\lceil\left(\frac{2}{2 - \sqrt{2}}\right)^n\right\rceil \approx (3.414)^n$.

\end{corollary}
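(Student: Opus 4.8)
The plan is to derive this corollary directly from the preceding theorem $f_2(n;2) \le \lceil (2+\sqrt 2)^n\rceil$ via the path-versus-clique framework of Theorem~\ref{main} and its refinement in the remark opening Section~\ref{secondlast}. The key observation is that the lower-bound construction in the proof of Theorem~\ref{main} only used the fact that the auxiliary $(k-1)$-coloring $\phi$ avoided a monochromatic clique, but the matching upper-bound construction (and the remark) shows that to produce a red $P_4^{(3)}$ or a blue $K_n^{(3)}$ it suffices to have a $\phi$-nonincreasing set. So I would specialize to $k=3$, $t=1$, $s_1=4$, hence $q = s_1-k+1 = 2$: a red-blue coloring $\chi$ of the triples of $[N]$ induces, and is induced by, the relation ``$\phi(v_1,v_2) < \phi(v_2,v_3) \Leftrightarrow$ red'' for some $2$-coloring $\phi$ of the pairs, exactly as in the lower-bound construction for $r_3(P_4,n)$ already given in the excerpt.

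First I would set up the reduction precisely. Given $\chi : \binom{[N]}{3} \to \{\text{red},\text{blue}\}$, define $\phi:\binom{[N]}{2}\to\{1,2\}$ as in the Theorem~\ref{main} lower bound — or, more simply, run the upper-bound direction: for a pair $v_1<v_2$ let $\phi(v_1,v_2)\in\{1,2\}$ record whether the longest red tight path ending at that pair has length $0$ or $\ge 1$ (capped at $1$ since a red $P_4^{(3)}$ is forbidden). If $\chi$ has no red $P_4^{(3)}$, this is well-defined with values in $\{1,2\}$ (i.e.\ $\{0,1\}$ shifted). The edge-monotonicity computation in the proof of Theorem~\ref{main} then shows: whenever $\chi(v_1,v_2,v_3)=\text{blue}$, one has $\phi(v_1,v_2)\ge\phi(v_2,v_3)$. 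Hence if $T\subseteq[N]$ is a set on which every triple is blue, then $T$ is $\phi$-nonincreasing in the sense of the definition preceding the theorem. Conversely, if $\chi$ has no blue $K_n^{(3)}$ and no red $P_4^{(3)}$, then $\phi$ has no nonincreasing set of size $n$.

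Next I would invoke the theorem just proved: $f_2(n;2)\le\lceil(2+\sqrt2)^n\rceil$, so any $2$-coloring of the pairs of $[N]$ with $N\ge\lceil(2+\sqrt2)^n\rceil$ has a nonincreasing set of size $n$. Combining with the reduction, for such $N$ every red-blue coloring $\chi$ of $\binom{[N]}{3}$ either contains a red $P_4^{(3)}$, or (if not) yields a $\phi$ with no nonincreasing $n$-set, which is impossible once $N\ge f_2(n;2)$; hence $\chi$ contains a blue $K_n^{(3)}$. Therefore $r_3(P_4,n)\le\lceil(2+\sqrt2)^n\rceil$. The final cosmetic step is to note $2+\sqrt2 = \frac{2}{2-\sqrt2}$ (multiply numerator and denominator by $2+\sqrt2$: $\frac{2(2+\sqrt2)}{(2-\sqrt2)(2+\sqrt2)}=\frac{2(2+\sqrt2)}{2}=2+\sqrt2$), which rewrites the bound in the stated form and gives the numerical value $\approx 3.414$.

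I do not anticipate a serious obstacle here — the content is entirely in the preceding theorem. The one point requiring a little care is checking that the ``nonincreasing'' direction really is the right one: in the definition, $T$ is nonincreasing when $\chi(v_1,\dots,v_k)\ge\chi(v_2,\dots,v_{k+1})$, and here $k-1=2$ so we need $\phi$ on consecutive pairs to be nonincreasing along any increasing $k$-tuple, which is exactly what ``$\chi$ blue $\Rightarrow \phi(v_1,v_2)\ge\phi(v_2,v_3)$'' delivers once we have verified $\chi$ is blue on all triples of $T$. I would also double-check the (trivial) arithmetic identity $2+\sqrt2=2/(2-\sqrt2)$ so the statement matches verbatim, and confirm the ceiling is harmless since $f_2$ is integer-valued.
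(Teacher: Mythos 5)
Your overall strategy is exactly the paper's (the paper proves the corollary implicitly via the remark opening Section~\ref{secondlast}: the upper-bound argument of Theorem~\ref{main} only needs a nonincreasing set, so $r_k(P_s, n) \le f_{k-1}(n;q)$, and here $r_3(P_4,n) \le f_2(n;2) \le \lceil (2+\sqrt2)^n \rceil$, with $2+\sqrt2 = 2/(2-\sqrt2)$). However, your bridging implication is stated backwards and is in fact false. You claim ``whenever $\chi(v_1,v_2,v_3)=\text{blue}$, one has $\phi(v_1,v_2)\ge\phi(v_2,v_3)$.'' This does not hold: $\phi(v_2,v_3)=1$ only requires \emph{some} $u<v_2$ with $\chi(u,v_2,v_3)$ red, and $u$ need not equal $v_1$. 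For instance on $[4]$ with $\chi(1,3,4)$ red and all other triples blue, there is no red $P_4^{(3)}$, yet $\chi(2,3,4)$ is blue while $\phi(2,3)=0 < 1 = \phi(3,4)$. The monotonicity computation in Theorem~\ref{main} gives the \emph{other} direction: $\chi(v_1,v_2,v_3)$ red $\Rightarrow \phi(v_2,v_3) > \phi(v_1,v_2)$, whose contrapositive is $\phi(v_1,v_2)\ge\phi(v_2,v_3) \Rightarrow \chi(v_1,v_2,v_3)$ blue.

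That is the direction you actually need: a $\phi$-nonincreasing $n$-set $T$ (guaranteed once $N \ge f_2(n;2)$, assuming no red $P_4^{(3)}$ so $\phi$ is $\{0,1\}$-valued) has no red triple, hence is a blue $K_n^{(3)}$. Your subsequent sentence ``Conversely, if $\chi$ has no blue $K_n^{(3)}$ and no red $P_4^{(3)}$, then $\phi$ has no nonincreasing set of size $n$'' is correct and is exactly the contrapositive of this, but it does not follow from the false implication you stated; your final paragraph and your closing sanity check again cite the wrong direction. With the implication reversed (nonincreasing $\Rightarrow$ blue clique, not blue clique $\Rightarrow$ nonincreasing) the proof is complete and coincides with what the paper intends.
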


\medskip

\section{A related off-diagonal problem}\label{last}

Let $K_4^{(3)}\setminus e$ denote the 3-uniform hypergraph on four vertices, obtained by removing one edge from $K_4^{(3)}$.  A simple argument of Erd\H os and Hajnal \cite{EH72}  implies $r(K_4^{(3)}\setminus e,K_n^{(3)}) < (n!)^2$.   Here, we generalize their argument to establish an upper bound for Ramsey numbers for $k$-half-graphs versus cliques.  A \emph{$k$-half-graph}, denote by $B^{(k)}$, is a $k$-uniform hypergraph on $2k-2$ vertices, whose vertex set is of the form $S\cup T$, where $|S| = |T| = k-1$, and whose edges are all $k$-subsets that contain $S$, and one $k$-subset that contains $T$.  So $B^{(3)}=K_4^{(3)}\setminus e$.
The goal of this section is to obtain upper and lower bounds for $r(B^{(k)},K^{(k)}_n)$.  We start with the upper bound.

\begin{theorem}\label{halfup}
For $k\geq 4$, we have $ r(B^{(k)},K^{(k)}_n) \leq (n!)^{k-1}.$

\end{theorem}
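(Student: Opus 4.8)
The plan is to mimic the Erd\H os--Hajnal argument for $r(K_4^{(3)}\setminus e, K_n^{(3)}) < (n!)^2$ and push the induction one dimension up in the clique parameter at a time. Set $N = (n!)^{k-1}$ and suppose we are given a red-blue coloring $\chi$ of $\binom{[N]}{k}$ with no red $B^{(k)}$; I want to produce a blue $K_n^{(k)}$. The key observation is what having no red $B^{(k)}$ means locally: fix an ordered $(k-1)$-set $S = \{x_1 < \cdots < x_{k-1}\}$ and look at the ``link'' coloring it induces on the vertices beyond $x_{k-1}$, where a vertex $v$ is assigned the color $\chi(S \cup \{v\})$. If two vertices $v < w$ past $S$ both receive red, and additionally there is \emph{any} $(k-1)$-set $T$ (disjoint from or overlapping appropriately) with $\chi(T \cup \{w\})$ red, we would be in danger of creating the half-graph; more precisely, absence of a red $B^{(k)}$ forces that once we have collected $k-1$ vertices $v_1 < \cdots < v_{k-1}$ such that $S \cup \{v_j\}$ is red for every $j$ (so $S$ together with these vertices carries all the ``$S$-edges'' of a half-graph on vertex classes $S$ and $\{v_1,\dots,v_{k-1}\}$), the single extra edge $\{v_1,\dots,v_{k-1}\} \cup \{v_k\}$ for any later vertex $v_k$ must be \emph{blue}. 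So a red-heavy link cannot contain too large a red structure without forcing a large blue set among later vertices.

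Concretely, I would run the following greedy/recursive selection. Maintain a current ``active'' set of vertices $V_i$ and a partial blue clique $C_i$; at each step pick the least active vertex $v$, and split the remaining active vertices according to the color $\chi(C_i' \cup \{v\})$ for the relevant $(k-1)$-subsets $C_i'$ of $C_i \cup \{v\}$ — the standard Ramsey-style branching, but now using the no-$B^{(k)}$ hypothesis to control how often we may branch into ``red.'' The factorial bound $(n!)^{k-1}$ should emerge because at the step where the blue clique has size roughly $j$, the branching factor is about $n$ (coming from the $k-1$ coordinates and the bound that a red link cannot sustain $n$ red choices), so after $n$ steps we lose a factor of order $(n!)^{k-1}$ and still have an active vertex left, which we add to the blue clique. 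The cleanest way to organize this is probably an induction on $n$: show $r(B^{(k)}, K_n^{(k)}) \le n \cdot r(B^{(k)}, K_{n-1}^{(k)}) \cdots$ with the product telescoping to $(n!)^{k-1}$, exactly as Erd\H os--Hajnal did for $k=3$ where each step costs a factor of $n$ twice (hence $(n!)^2$), and here it should cost a factor of $n$ a total of $k-1$ times per unit increment of the clique, or equivalently one runs $k-1$ nested auxiliary inductions.

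The main obstacle will be bookkeeping the half-graph's asymmetry correctly: $B^{(k)}$ is \emph{not} $K_{2k-2}^{(k)}$ minus edges in a symmetric way — it has \emph{all} $k$-sets containing a fixed $(k-1)$-set $S$ plus exactly \emph{one} more edge (a $k$-set containing the other $(k-1)$-set $T$). So the forbidden configuration is: a $(k-1)$-set $S$, a further vertex $t_1 < \cdots$ wait — I must be careful about the ordering/position constraints, since in this paper everything is ordered and $P_s^{(k)}$ etc. are defined on ordered vertex sets, but $B^{(k)}$ as defined here appears to be an \emph{unordered} hypergraph. I would first pin down precisely which ordered embeddings matter, then verify that the greedy process, when it accumulates $k-1$ vertices all forming red edges with the stem $S$, genuinely can choose the orientation of the ``extra'' edge to land inside the red part and hence forces blueness of the continuation; this is the step where one must check that no red $B^{(k)}$ really does prohibit the branch from continuing red $n$ times. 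Once that local lemma is stated cleanly, the recursion $r(B^{(k)},K_n^{(k)}) \le n^{k-1}\, r(B^{(k)}, K_{n-1}^{(k)})$ (with trivial base case) gives $r(B^{(k)},K_n^{(k)}) \le (n!)^{k-1}$ immediately.
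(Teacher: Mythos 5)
Your high-level plan --- induction on $n$ with a loss of $n^{k-1}$ per decrement of the clique parameter --- is sound, and you correctly identify the key local fact: if $S$ is a $(k-1)$-set with red link $N(S) = \{v : S\cup\{v\} \text{ red}\}$, then fixing any $u\in S$, every $(k-1)$-subset $T\subset N(S)$ must have $\{u\}\cup T$ blue, lest $S\cup T$ form a red $B^{(k)}$; hence a blue $K_{n-1}^{(k)}$ found inside $N(S)$ by induction extends to a blue $K_n^{(k)}$ by adjoining $u$. This is indeed the engine of the paper's proof. (One correction you half-flagged yourself: the ``extra'' edge of $B^{(k)}$ is $T\cup\{u\}$ with $u\in S$, not $T$ together with a fresh later vertex; the vertex set of $B^{(k)}$ has only $2k-2$ vertices, so the extra edge must re-use a vertex of $S$.)

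The genuine gap is that you never establish the recursion $r(B^{(k)},K_n^{(k)}) \le n^{k-1}\, r(B^{(k)}, K_{n-1}^{(k)})$, and your greedy/branching sketch has no mechanism to produce a stem $S$ with $|N(S)| \ge ((n-1)!)^{k-1}$ when the red hypergraph is sparse. The paper handles this with a density dichotomy: if $|E_R|$ is below a threshold, it applies Spencer's Tur\'an-type bound (Lemma~\ref{spencer}) to the red hypergraph to extract an independent set of size $n$, i.e.\ a blue $K_n^{(k)}$, directly; otherwise an averaging count over $(k-1)$-sets yields a stem $S$ with $|N(S)| \ge ((n-1)!)^{k-1}$, and the local observation above finishes the step. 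Your proposal contains no analogue of the Tur\'an/averaging machinery, and the claim that ``the branching factor is about $n$'' because ``a red link cannot sustain $n$ red choices'' is not a consequence of the no-$B^{(k)}$ hypothesis --- that hypothesis controls which colors can appear \emph{inside} a red link, not how large the link is. Without a density argument to guarantee either a sparse red hypergraph (use Tur\'an) or a large red link (use the extension trick), the induction as you describe it does not close.
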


\noindent First, let us recall an old lemma due to Spencer.

\begin{lemma}[\cite{S}]\label{spencer}
Let $H = (V,E)$ be a $k$-uniform hypergraph on $N$ vertices.  If $|E(H)| > N/k$, then there exists a subset $S\subset V(H)$ such that $S$ is an independent set and

$$|S| \geq \left(1 - \frac{1}{k}\right)N \left(\frac{N}{k|E(H)|}\right)^{\frac{1}{k-1}}.$$

\end{lemma}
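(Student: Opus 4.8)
The plan is to prove this by a probabilistic deletion argument, the standard technique for turning a dense hypergraph into a large independent set. Let $H = (V,E)$ have $N$ vertices and $m = |E(H)| > N/k$ edges. First I would fix a probability $p \in [0,1]$ to be optimized later, and select a random subset $V' \subseteq V$ by including each vertex independently with probability $p$. Then $\EE[|V'|] = pN$, and the expected number of edges entirely inside $V'$ is $\EE[|E'|] = p^k m$, since an edge of size $k$ survives precisely when all $k$ of its vertices are chosen. By linearity of expectation, $\EE[|V'| - |E'|] = pN - p^k m$. So there is a specific choice of $V'$ with $|V'| - |E'| \ge pN - p^k m$; deleting one vertex from each surviving edge of $E'$ leaves an independent set $S$ with $|S| \ge pN - p^k m$.

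Next I would optimize over $p$. Treating $g(p) = pN - p^k m$ as a function on $[0,1]$, we set $g'(p) = N - k p^{k-1} m = 0$, giving $p = \left(\frac{N}{km}\right)^{1/(k-1)}$. The hypothesis $m > N/k$ guarantees $N/(km) < 1$, so this $p$ is a legitimate probability in $(0,1)$. Substituting back, the two terms become $pN$ and $p^k m = p \cdot p^{k-1} m = p \cdot \frac{N}{k}$, so
\[
g(p) = pN - \frac{pN}{k} = \left(1 - \frac1k\right) pN = \left(1 - \frac1k\right) N \left(\frac{N}{km}\right)^{1/(k-1)},
\]
which is exactly the claimed bound. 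I would remark that the floor of this quantity is what one actually gets for $|S|$, but since the statement is an inequality "$\ge$", reporting the real-valued lower bound is fine (one may take the integer part silently).

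The only genuinely delicate point — and the one I would be careful to state cleanly rather than gloss over — is ensuring the optimizing $p$ lies in $[0,1]$, which is precisely where the hypothesis $|E(H)| > N/k$ is used; without it the unconstrained optimum would be outside the feasible range and the clean closed form would fail. Everything else is routine: linearity of expectation, the deletion step, and the single-variable calculus. I would present the argument in three short paragraphs mirroring the three steps above, keeping the computation of $g(p)$ at the optimum explicit since that is what produces the stated constant $(1 - 1/k)$ and the exponent $1/(k-1)$.
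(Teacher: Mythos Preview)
The paper does not actually prove this lemma; it is quoted from Spencer~\cite{S} and used as a black box. Your proposal is correct and is precisely the standard probabilistic deletion argument that Spencer's original paper gives, so there is nothing to compare.
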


\medskip

\noindent \emph{Proof of Theorem \ref{halfup}.}  We proceed by induction on $n$.  The base case $n = k$ is trivial.  Let $n > k$ and assume the statement holds for $n'  < n$.  Let $k\geq 4$ and let $\chi$ be a red/blue coloring on the edges of $K^{(k)}_N$, where $N = (n!)^{k-1}$.  Let $E_R$ denote the set of red edges in $K^{(k)}_N$.

\medskip

\noindent \emph{Case 1}:  Suppose $|E_R| < \frac{\left(1 - \frac{1}{k}\right)^{k-1}N^k}{ k n^{k-1}  }.$ Then by Lemma \ref{spencer}, $K^{(k)}_N$ contains a blue clique of size $n$.

\medskip

\noindent \emph{Case 2}:  Suppose $|E_R| \geq \frac{\left(1 - \frac{1}{k}\right)^{k-1}N^k}{ k n^{k-1}  }.$  Then by averaging, there is a $(k-1)$-element subset $S\subset V$ such that $N(S) = \{v \in V: S\cup \{v\} \in E_R\}$ satisfies

$$|N(S)| \geq \frac{\left(1 - \frac{1}{k}\right)^{k-1}N^k}{ n^{k-1}{n\choose k-1} }  \geq \left((n-1)!\right)^{k-1} .$$

\noindent  The last inequality follows from the fact that $k\geq 4$.  Fix a vertex $u \in S$.  If $\{u\}\cup T \in E_R$ for some $T\subset N(S)$ such that $|T| = k-1$, then $S\cup T$ forms a red $B^{(k)}$ and we are done.  Therefore we can assume otherwise.  By the induction hypothesis, $N(S)$ contains a red copy of $B^{(k)}$, or a blue copy of $K^{(k)}_{n-1}$.  We are done in the former case, and the latter case, we can form a blue $K_{n}^{(k)}$ by adding the vertex $u$. $\hfill\square$

\medskip

We now give constructions which show that $r(B^{(k)},K^{(k)}_n)$ is at least exponential in $n$.

\begin{theorem}\label{halflow}
For fixed $k\geq 3$, we have $ r(B^{(k)},K^{(k)}_n) > 2^{\Omega(n)}.$

\end{theorem}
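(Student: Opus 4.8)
The plan is to mimic the $r_3(P_4,n) > 2^{\Omega(n)}$ construction from Theorem~\ref{mainlower}, but producing a coloring that avoids a red $B^{(k)}$ rather than a red tight path. The key observation is that $B^{(k)}$ is a very rigid configuration: it consists of \emph{all} $k$-subsets of an ordered set $S \cup \{w_1,\dots,w_{k-1}\}$ that contain the fixed $(k-1)$-set $S$, together with a single further edge on $T = \{w_1,\dots,w_{k-1}\}$. So to forbid a red $B^{(k)}$ it suffices to ensure that whenever $S = \{u_1 < \cdots < u_{k-1}\}$ and $w_1 < \cdots < w_{k-1}$ lie to the right of $S$ and all the edges $S \cup \{w_j\}$ are red, then the edge $\{w_1,\dots,w_{k-1}\}$ must be blue. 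I would set up a coloring derived from a random (or structured) coloring $\phi$ of the $(k-1)$-subsets of $[N]$ with two colors, and define $\chi$ on a $k$-set $\{v_1 < \cdots < v_k\}$ by comparing $\phi$ on its two "boundary" $(k-1)$-faces $\{v_1,\dots,v_{k-1}\}$ and $\{v_2,\dots,v_k\}$ — exactly the stepping-up flavor used elsewhere in the paper, e.g.\ coloring red if $\phi(v_1,\dots,v_{k-1}) < \phi(v_2,\dots,v_k)$ and blue otherwise.

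First I would check that this $\chi$ has no red $B^{(k)}$: the structure of $B^{(k)}$ forces a chain of equalities/inequalities among $\phi$-values of nested $(k-1)$-sets that cannot all point the same way, giving the contradiction combinatorially and deterministically (so this step uses no randomness). Second, I would bound the probability that a set $T$ of $n$ vertices is a blue clique. For a fixed $T = \{v_1 < \cdots < v_n\}$, as in the $r_3(P_4,n)$ argument, I would locate many $k$-tuples inside $T$ whose defining events ($\chi = $ blue) are mutually independent — concretely, by choosing a partial Steiner-type system of $(k-1)$-subsets of $T$ so that the pairs of boundary faces involved are disjoint enough that the corresponding blue-events are independent; this yields a bound of the form $(1-\delta)^{\Omega(n^2)}$ or at least $(1-\delta)^{\Omega(n)}$ on the probability that $T$ is blue. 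Third, a union bound over all $\binom{N}{n} \le N^n$ choices of $T$ shows that for $N = 2^{cn}$ with $c$ small enough, the expected number of blue $K_n^{(k)}$'s is less than $1$, so a good coloring exists, giving $r(B^{(k)}, K_n^{(k)}) > 2^{\Omega(n)}$.

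The main obstacle is the independence bookkeeping in the second step. The events "$\chi(v_{i_1},\dots,v_{i_k}) = $ blue" depend on the $\phi$-values of $(k-1)$-subsets, and two different $k$-tuples can share a $(k-1)$-face, so their blue-events are correlated; I need to extract a sub-family of $k$-tuples of size linear (or better, quadratic) in $n$ whose boundary $(k-1)$-faces are pairwise disjoint, or at least whose blue-events form an independent family, which is exactly what a partial Steiner system on the $(k-1)$-faces buys. I would also need the elementary fact that $\Pr[\chi = \text{blue}]$ is bounded away from $1$ for a single $k$-tuple (here it is $1/2$ for the two-valued $\phi$ if the two boundary faces get independent uniform values, or can be made $\le 3/4$ as in the $k=3$ case), and that such a partial Steiner system with $\Omega(n)$ edges on an $n$-element ground set exists — both standard. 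The case $k = 3$ of this theorem is literally the construction already given for $r_3(P_4,n)$ together with the observation that a red $P_4^{(3)}$ is forced by a red $K_4^{(3)} \setminus e = B^{(3)}$ (indeed $B^{(3)}$ contains three of the four triples of its vertices, in particular a tight path on all four), so only $k \ge 4$ genuinely needs the generalization above.
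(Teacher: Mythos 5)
There is a genuine gap in the first step of your argument: the stepping-up coloring you propose does \emph{not} avoid a red $B^{(k)}$. Take $k=3$ and four vertices $v_1<v_2<v_3<v_4$ with $\phi(v_1,v_2)=\phi(v_1,v_3)=1$ and $\phi(v_2,v_3)=\phi(v_2,v_4)=\phi(v_3,v_4)=2$. Then the triples $\{v_1,v_2,v_3\}$, $\{v_1,v_2,v_4\}$, $\{v_1,v_3,v_4\}$ are all red, and these form a $B^{(3)}$ (with $S=\{v_1,v_2\}$, $T=\{v_3,v_4\}$, and the edge through $T$ being $T\cup\{v_1\}$). There is no ``chain of inequalities that cannot all point the same way'' because the relevant $(k-1)$-faces are not nested: once the missing triple of $B^{(3)}$ happens to be $\{v_2,v_3,v_4\}$ (or $\{v_1,v_2,v_3\}$), no tight-path $P_4^{(3)}$ is present, so there is no contradiction to extract. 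The same phenomenon persists for $k\ge 4$: if $S=\{u_1<\cdots<u_{k-1}\}$ lies entirely to the left of $T=\{w_1<\cdots<w_{k-1}\}$, the $k$ edges of $B^{(k)}$ impose the constraints $\phi(u_1,\dots,u_{k-1})<\phi(u_2,\dots,u_{k-1},w_j)$ for all $j$ together with one more inequality on $T\cup\{u_i\}$, and these all involve distinct $(k-1)$-sets, so a $2$-valued $\phi$ can satisfy them simultaneously. This also breaks your claimed reduction of the $k=3$ case to the already-proved bound for $r_3(P_4,n)$: it is not true that every (ordered) copy of $B^{(3)}$ contains a $P_4^{(3)}$, so $r_3(P_4,n)\le r(B^{(3)},K_n^{(3)})$ does not follow.

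The paper's actual proof uses a fundamentally different pair of constructions, neither of the stepping-up form. For odd $k$ it takes a random tournament on $[N]$ and colors a $k$-set red iff it induces a regular tournament; a red $B^{(k)}$ is then impossible because if all $S\cup\{w\}$, $w\in T$, induce regular tournaments then, for any $u\in S$, all arcs between $u$ and $T$ are forced to point the same way, so $T\cup\{u\}$ cannot be regular. For even $k$ it takes a random $(k-1)$-coloring of pairs and colors a $k$-set red iff the pair-coloring restricted to it is a proper edge-coloring (a $1$-factorization of $K_k$); again, if all $S\cup\{w\}$ are red then the $k-1$ pairs from any $u\in S$ into $T$ receive all $k-1$ colors, so no color class can be a perfect matching in $T\cup\{u\}$. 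The Steiner-system independence trick and the union bound in your proposal are indeed the right tools for the probabilistic step, but they must be applied to one of these two colorings (and the probability of a single $k$-set being blue need only be bounded away from $1$, not by $1/2$, which is exactly how the paper proceeds). To repair your argument you would need to replace the stepping-up coloring with a coloring in which being red encodes a ``balanced'' or ``rainbow'' local pattern on the pairs, so that redness of the star through $S$ actively obstructs redness of the edge through $T$.
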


\begin{proof} Surprisingly, we require different arguments for $k$ even and $k$ odd.

\medskip

\noindent \emph{The case when $k$ is odd.}  Assume $k$ is odd, and set $N = 2^{cn}$ where $c = c(k)$ will be determined later.  Then let $T$ be a random tournament on the vertex set $ [N]$, that is, for $i,j \in [N]$, independently, either $(i,j) \in E$ or $(j,i) \in E$, where each of the two choices is equally likely.  Then let $\chi:{[N]\choose k} \rightarrow \{\textnormal{red},\textnormal{blue}\}$ be a red/blue coloring on the $k$-subsets of $[N]$, where $\chi(v_1,\ldots, v_k) =$ red if $v_1,\ldots, v_k$ induces a \emph{regular} tournament, that is, the indegree of every vertex is $(k-1)/2$ (and hence the outdegree of every vertex is $(k-1)/2$).  Otherwise we color it blue.  We note that since $k$ is odd, a regular tournament on $k$ vertices is possible by the fact that $K_{k}$ has an Eulerian circuit, and then by directing the edges according to the circuit we obtain a regular tournament.

Notice that the coloring $\chi$ does not contain a red $B^{(k)}$.  Indeed, let $S,T \subset [N]$ such that $|S| = |T| = k-1$, $S\cap T = \emptyset$, and every $k$-tuple of the form $S\cup \{v\}$ is red, for all $v \in T$.  Then for any $u \in S$, all edges in the set $u\times T$ must have the same direction, either all emanating out of $u$ or all directed towards $u$.  Therefore it is impossible for $u\cup T$ to have color red, for any choice $u \in S$.

Next we estimate the expected number of monochromatic blue copies of $K^{(k)}_n$ in $\chi$.  For a given $k$-tuple $v_1,\ldots,v_k \in [N]$, the probability that $\chi(v_1,\ldots,v_k) = \textnormal{blue}$ is clearly at most $1 - 1/2^{{k\choose 2}}$.  Let $T = \{v_1,\ldots, v_n\}$ be a set of $t$ vertices in $[n]$, where $v_1 < \cdots < v_n$.  Let $S$ be a partial Steiner $(n,k,2)$-system with vertex set $T$, that is, $S$ is a $k$-uniform hypergraph such that each $2$-element set of vertices is contained in at most one edge in $S$.  Moreover, $S$ satisfies $|S| = c_kn^{2}$.  It is known that such a system exists. Then the probability that every $k$-tuple in $T$ has color blue is at most the probability that every $k$-tuple in $S$ is blue.  Since the edges in $S$ are independent, that is no two edges have more than one vertex in common, the probability that $T$ is a monochromatic blue clique is at most $\left(1 - 1/2^{{k\choose 2}}\right)^{|S|} \leq \left(1 - 1/2^{{k\choose 2}}\right)^{c_kn^{2}}$.  Therefore the expected number of monochromatic blue copies of $K^{(k)}_n$ in $\chi$ is at most

$${N\choose n}\left(1 - 1/2^{{k\choose 2}}\right)^{c_kn^{2}} < 1,$$

\noindent for an appropriate choice for $c = c(k)$.  Hence, there is a coloring $\chi$ with no red $B^{(k)}$ and no blue $K^{(k)}_n$.  Therefore

$$r(B^{(k)},K_n^{(k)}) > 2^{cn }.$$

\medskip

\noindent \emph{The case when $k$ is even.}   Assume $k$ is even and set $N = 2^{ct}$ where $c = c(k)$ will be determined later.  Consider the coloring $\phi:{[N]\choose 2} \rightarrow \{1,\ldots, k-1\}$, where each edge has probability $1/(k-1)$ of being a particular color independent of all other edges (pairs).  Using $\phi$, we define the coloring $\chi:{[N]\choose k}\rightarrow \{\textnormal{red},\textnormal{blue}\}$, where the $k$-tuple $(v_1,\ldots, v_k)$ is red if $\phi$ is a proper edge-coloring on all pairs among $\{v_1,\ldots, v_k\}$, that is, each of the $k-1$ colors appears as a perfect matching.  Otherwise we color it blue.

Notice that the coloring $\chi$ does not contain a red $B^{(k)}$.  Indeed let $S,T \subset [N]$ such that $|S| = |T| = k-1$ and $S\cap T = \emptyset$.  If every $k$-tuple of the form $S\cup \{v\}$ is red, for all $v \in T$, then all $k-1$ colors from $\phi$ appear among the edges (pairs) in the set $S\times \{v\}$.  Hence for any vertex $u\in S$, $\chi$ could not have colored $u\cup T$ red since it is impossible to have \emph{any} of the $k-1$ colors to appear as a perfect matching in $u\cup T$.

For a given $k$-tuple $v_1,\ldots,v_k \in [N]$, the probability that $\chi(v_1,\ldots,v_k) = \textnormal{blue}$ is at most $1 - (1/(k-1))^{{k\choose 2}}$.  By the same argument as above, the expected number of monochromatic blue copies of $K^{(k)}_n$ with respect to $\chi$ is less than 1 for an appropriate choice of $c = c(k)$.  Hence, there is a coloring $\chi$ with no red $B^{(k)}$ and no blue $K^{(k)}_n$.  Therefore

$$r(B^{(k)},K_n^{(k)}) > 2^{cn}$$
and the proof is complete.
\end{proof}

\section{Acknowledgments}
We thank David Conlon and Jacob Fox for comments that helped improve the presentation of this paper.

\end{document}